\newtheorem{thm}{Theorem}[section]
\newtheorem{prob}{Problem}[section]
\newtheorem{lem}{Lemma}[section]
\newtheorem{cor}{Corollary}[section]
\newtheorem{claim}{Claim}[section]
\theoremstyle{definition}
\begin{document}
\title{Extremal problems on planar graphs without $k$ edge-disjoint cycles\footnote{Supported by the National Natural Science Foundation of China
(No. 12171066), Anhui Provincial Natural Science Foundation (No. 2108085MA13) and Natural Science Foundation of Guangdong Province (No. 2022A1515011786).}}
\author{{\bf Mingqing Zhai$^{a}$},
~{\bf Muhuo Liu$^{b}$}\thanks{ E-mail addresses: mqzhai@chzu.edu.cn
(Zhai); liumuhuo@163.com (Liu, Corresponding author).}
\\
{\footnotesize $^a$ School of Mathematics and Finance, Chuzhou University, Chuzhou, Anhui 239012, China} \\
{\footnotesize $^b$ Department of Mathematics, South China Agricultural University, Guangzhou, Guangdong, 510642, China}}
\date{}

\date{}
\maketitle
{\flushleft\large\bf Abstract}
In the 1960s, Erd\H{o}s and his cooperators initiated the research of the maximum numbers of edges in a graph or a planar graph on $n$ vertices without $k$ edge-disjoint cycles.
This problem had been solved for $k\leq4$.
As pointed out by Bollob\'{a}s, it is very difficult for general $k$.
Recently, Tait and Tobin [J. Combin. Theory Ser. B, 2017] confirmed a famous conjecture on maximum spectral radius of $n$-vertex planar graphs.
Motivated by the above results,
we consider two extremal problems on planar graphs without $k$ edge-disjoint cycles.
We first determine the maximum number of edges in a planar graph of order $n$ and maximum degree $n-1$ without $k$ edge-disjoint cycles.
Based on this, we then determine the maximum spectral radius as well as
its unique extremal graph over all planar graphs on $n$ vertices
without $k$ edge-disjoint cycles.
Finally, we also discuss several extremal problems for general graphs.

\begin{flushleft}
\textbf{Keywords:} extremal problem; planar graph; edge-disjoint cycles; spectral radius
\end{flushleft}
\textbf{AMS Classification:} 05C35, 05C50

\section{Introduction}
Let $G$ be a simple graph.
As usual, we denote by $V(G)$ the vertex set and $E(G)$ the edge set.
Let $v(G)$, $e(G)$ and $t(G)$ be the numbers of vertices, edges and triangles in $G$, respectively.
For a vertex $v\in V(G)$,
$N_G(v)$ denotes its neighborhood
and $d_G(v)$ denotes its degree in $G$. A \emph{$k$-vertex} is a vertex of degree $k$.
A \emph{block} is a maximal connected subgraph which contains no cut vertices.
A connected graph $G$ is called a \emph{triangle-cactus},
if every block of $G$ is a triangle.
Given two graphs $G_1$ and $G_2$ with $V(G_1)\cap V(G_2)=\varnothing$,
let $G_1\nabla G_2$ be the \emph{join} of $G_1$ and $G_2$,
which is obtained by joining each vertex of $G_1$ with each of $G_2$.
A \emph{friendship graph} $F_k$ is the graph obtained
from $k$ triangles by sharing exactly one vertex.
We define $F_0$ as an isolated vertex.

Let $g(k)$ denote the least integer such
that every graph with $n$ vertices and $n+g(k)$
edges, contains at least $k$ edge-disjoint cycles,
while $h(k)$ denotes the least integer such that every planar graph with $n$ vertices and $n+h(k)$ edges contains at least $k$ edge-disjoint cycles.
One can easily check that $g(1)=h(1)=0$.
Erd\H{o}s and his cooperators initiated the research of graphs without $k$ edge-disjoint cycles \cite{erd1,erd2,erd3}.
In 1962, Erd\H{o}s and P\'osa \cite{erd2}
showed that $g(2)=4$. Later, Dirac and Erd\H{o}s \cite{erd1} discovered that $h(2)=3$ in 1963, and Moon \cite{moon} identified that $g(3)=10$, $h(3)=7$ and $h(4)=11$ in 1964. In the sequel, the exact value of $g(4)$ was determined by Bollob\'as, who showed $g(4)=18$ in his classic monograph \cite{BO}. Here, we need to point out that the graphs concerned in \cite{erd1,erd2,moon,BO}
permit loops and multiple edges, but the results also hold for simple graphs.
As pointed out by Bollob\'as (see \cite{BO}, P. 121), it is very difficult to determine $g(k)$ for general $k$. In this line, a result due to Erd\H{o}s and P\'osa \cite{erd2} indicates that $g(k)=\Theta(k\log_2^k)$.
This was improved by Bollob\'as (see Theorem 3.6 of \cite{BO}, P. 125), who showed that $\frac{1}{2}k\log_2^k<g(k)<\big(2+o(1)\big)k\log_2^k.$
In contrast with $g(k)$, H\"{a}ggkvist obtained a sharp lower bound for $h(k)$ (see Theorem 3.9 of \cite{BO}, P. 130), he proved that $h(k)\ge 4k-5$ for every $k\ge 2$. Moreover, if every planar graph of order $n$ contains at least $\frac n4$ independent vertices, then $h(k)=4k-5$ for every $k\ge 2$.

In this paper, we first obtain the following edge-extremal result,
which determines the exact value of $h(k)$ for planar graphs with a dominating vertex.

\begin{thm}\label{thm1.1}
Let $G$ be a planar graph of order $n$ and maximum degree $n-1$
without $k$ edge-disjoint cycles.
Then $e(G)\leq n+3k-4$, with equality if and only if $G\cong K_1\nabla H$, where $t(H)=k-1$ and every non-trivial component of $H$ is a triangle-cactus.
\end{thm}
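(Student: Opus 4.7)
Let $v$ be the (unique) vertex of $G$ with $d(v)=n-1$ and set $H:=G-v$, so that $G=K_1\nabla H$ and $e(G)=(n-1)+e(H)$. Fix a planar embedding of $G$; since every other vertex is joined to $v$ by a Jordan arc, retracting $v$ shows that all vertices of $H$ lie on the boundary of a single face of $H$, hence $H$ is outerplanar. The desired bound $e(G)\leq n+3k-4$ is therefore equivalent to $e(H)\leq 3(k-1)$, with equality iff every non-trivial component of $H$ is a triangle-cactus (which forces $t(H)=k-1$).

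Every cycle of $G$ either lies in $H$ or uses $v$ together with a simple path of $H$; in either case it is contained in $G[\{v\}\cup V(H_i)]$ for some component $H_i$ of $H$. Writing $H_1,\ldots,H_r$ for the non-trivial components of $H$ and $\phi(H_i)$ for the maximum number of edge-disjoint cycles in $K_1\nabla H_i$, the maximum number of edge-disjoint cycles of $G$ equals $\sum_i\phi(H_i)$. The theorem then reduces to the following key lemma:
\[
\phi(H')\geq\frac{e(H')}{3}\quad\text{for every connected outerplanar }H'\text{ with }e(H')\geq 1,
\]
with equality if and only if $H'$ is a triangle-cactus. Granting this, summing gives $k-1\geq\sum_i\phi(H_i)\geq e(H)/3$, so $e(H)\leq 3(k-1)$; equality forces each $\phi(H_i)=e(H_i)/3$, so every $H_i$ is a triangle-cactus and $\sum_i t(H_i)=k-1$.

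I would prove the key lemma by induction on $e(H')$. If $H'$ contains a triangle $T$, I use $T$ as one edge-disjoint cycle and apply induction to each non-trivial component of $H'-E(T)$ (still outerplanar), obtaining $\phi(H')\geq(e(H')-3)/3+1=e(H')/3$; for equality, each component of $H'-E(T)$ must be a triangle-cactus and the three vertices of $T$ must lie in distinct components (otherwise a second path in $H'-E(T)$ between two vertices of $T$ would yield an extra edge-disjoint cycle), which glues $H'$ back into a triangle-cactus. When $H'$ is triangle-free (so of girth $\geq 4$), the strict inequality $\phi(H')>e(H')/3$ must be shown; I would combine edge-disjoint triangles $vxy$ through $v$ (indexed by a maximum matching of $H'$) with $4$-cycles $vxcyv$ at any degree-$\geq 2$ vertex $c$ of $H'$ to construct at least $\lceil e(H')/2\rceil>e(H')/3$ cycles in each tree component, and a block-by-block analysis of the outerplanar triangle-free $H'$ handles the cycle components.

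The principal obstacle is the upper bound $\phi(H')\leq t$ for a triangle-cactus $H'$ with $t$ triangles, needed to identify the extremal graphs. A parity argument---every vertex of $G'=K_1\nabla H'$ has odd degree, since each vertex of $H'$ has even degree in $H'$ (each containing triangle contributes $2$) and $|V(H')|=2t+1$---forces at most $4t$ edges to be used by any edge-disjoint cycle family, yielding $\phi(H')\leq\lfloor 4t/3\rfloor$; but this is sharp only for $t\leq 2$. Strengthening to $\phi(H')\leq t$ requires a leaf-triangle induction tracking the fact that a leaf triangle contributes at most one additional cycle to the family, because the two pendant vertices each have degree $3$ in $G'$ and any cycle through them ties up edges at the cut vertex that would otherwise be used by a cycle in $K_1\nabla(H'-T_{\text{leaf}})$; combined with the base case $\phi(K_4)=1$, this gives $\phi(H')\leq t$ and completes the characterization of equality.
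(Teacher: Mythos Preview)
Your overall strategy---reduce to $H:=G-v$ outerplanar and prove $\phi(K_1\nabla H')\ge e(H')/3$ for each connected piece, with equality only for triangle-cacti---is exactly the route the paper takes. But the two hardest steps are not really argued in your plan.

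\textbf{(1) The triangle-free case.} You assert that ``a block-by-block analysis of the outerplanar triangle-free $H'$ handles the cycle components'', but you never say what happens inside a $2$-connected triangle-free outerplanar block. Your matching/4-cycle construction gives $\lceil e/2\rceil$ only for \emph{trees}; for a block of girth $\ge 4$ it is not clear how you beat $e/3$. The paper's Claim~3.1 is precisely this step, and it is not soft: one fixes an outerplane drawing of $H$, lets $t$ be the number of inner faces and $s$ the number of edges lying on two inner faces, shows $s\le t-1$ via an auxiliary bipartite face/shared-edge graph, uses girth $\ge4$ to get $e(H)\ge 3t+1$, and then peels off one boundary edge per inner face to reduce to a forest and apply the tree bound. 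Without an argument of this kind your key lemma is unproven in the non-tree case.

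\textbf{(2) The equality characterization.} The sentence ``otherwise a second path in $H'-E(T)$ between two vertices of $T$ would yield an extra edge-disjoint cycle'' does not do the work. Even when the components of $H'-E(T)$ are triangle-cacti, the obvious packings---the $t_1$ triangles of the component $H_1'\ni a,b$ together with either $T$ or the triangle $vab$---exhaust all edges of $H_1'$ and of $T$ and leave only a star at $v$, so no extra cycle is visible; one must repack rather carefully (for instance, in the $F_2+T$ example one needs three triangles through $v$ together with the triangle $wab$). The paper sidesteps this local difficulty: Claim~3.2 first shows that when $e(H)=3(k-1)$ the edge set of $H$ decomposes into $k-1$ edge-disjoint triangles, and then Claim~3.3 builds a bipartite graph on $\{\text{triangles}\}\cup\{\text{vertices lying in}\ge 2\text{ triangles}\}$; a cycle in this auxiliary graph would yield $k$ edge-disjoint cycles in $G$, so it is a forest, which forces the triangle-cactus structure directly.

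Your leaf-triangle induction for $\phi(K_1\nabla H')\le t$ can be made to work, but note that the paper's Lemma~2.1 does it in two lines: if some cycle of an optimal family is a triangle of $H'$, delete it and induct; otherwise every cycle passes through $v$, whence $2\phi\le d(v)=v(H')=2t+1$.
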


Given a graph $G$,
let $A(G)$ be its adjacency matrix and $\rho(G)$ be the spectral radius of $A(G)$.
Maximizing the spectral radius over a fixed family of graphs is known as
\emph{Brualdi-Solheid problem}.

A variation of Brualdi-Solheid problem, proposed by Nikiforov, asks to maximize the spectral radius over all $n$-vertex $\mathcal{H}$-free graphs,
where $\mathcal{H}$ is a given graph or graph family.
Nikiforov systematically investigated this problem (see \cite{V0}),
and put forward some open questions or conjectures (see \cite{BN,V1,V2}).
Recently, a number of arrestive results on this topic were further presented
(see for example, \cite{CLZ,CFTZ,CS,WKX,CS1,CS2,LIN1,V3,Li,LP}).

As another variation of Brualdi-Solheid problem,
the spectral extremal problem on planar graphs also has a rich history.
In 1990, Cvetkovi\'{c} and Rowlinson \cite{CP} conjectured
that $\rho(G)\leq \rho(K_1\nabla P_{n-1})$ for every outerplanar graph $G$,
with equality if and only if $G\cong K_1\nabla P_{n-1}$.
Later, Boots and Royle \cite{BOOT}, and independently Cao and Vince \cite{CAO},
conjectured that $\rho(G)\leq \rho(K_2\nabla P_{n-2})$
for every planar graph $G$ of order $n\geq9$,
with equality if and only if $G\cong K_2\nabla P_{n-2}$.
Subsequently, many scholars contributed to these two conjectures
(see \cite{CAO,Hong1,Hong2,SHU}).
Ellingham and Zha \cite{EZ} showed that $\rho(G)\leq2+\sqrt{2n-6}$ for every planar graph $G$. Dvo\v{r}\'{a}k and Mohar \cite{DM} proved that $\rho(G)\leq \sqrt{8\Delta-16}+2\sqrt{3}$ for every planar graph $G$ with maximum degree $\Delta\geq2$.
In 2017, Tait and Tobin \cite{Tait1} solved these two conjectures for sufficiently large $n$. Recently, Lin and Ning \cite{LIN2} completely confirmed Cvetkovi\'{c}-Rowlinson conjecture.

Inspired by above edge-extremal and spectral extremal results on planar graphs,
we further consider a spectral extremal problem
for planar graphs without $k$ edge-disjoint cycles.
The extremal graph is uniquely characterized.

\begin{thm}\label{thm1.2}
Let $n\ge 144 k^4$ and $G$ be a planar graph of order $n$ without $k$ edge-disjoint cycles.
Then $\rho(G)\leq \rho(K_1\nabla H)$, with equality if and only if $G\cong K_1\nabla H$, where $H$ is the disjoint union of $F_{k-1}$ and $n-2k$ isolated vertices.
\end{thm}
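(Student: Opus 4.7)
The plan is to combine the edge-extremal characterization in Theorem~\ref{thm1.1} with a spectral analysis in the spirit of Tait--Tobin~\cite{Tait1}. Write $G^{\ast}:=K_1\nabla(F_{k-1}\cup(n-2k)K_1)$ and $\rho^{\ast}:=\rho(G^{\ast})$. A preliminary step is a lower bound on $\rho^{\ast}$: since $G^{\ast}\supseteq K_{1,n-1}$ we have $\rho^{\ast}\ge\sqrt{n-1}$, and an equitable-partition calculation on the four cells (apex, hub of $F_{k-1}$, the $2k-2$ outer triangle vertices, and the $n-2k$ isolated vertices of $H$) gives a strict quantitative improvement that will drive the structural analysis below.

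Let $G$ be an $n$-vertex planar graph without $k$ edge-disjoint cycles attaining the maximum spectral radius in this class, let $x$ be its Perron eigenvector normalised so that $x_u=\max_v x_v=1$, and set $\rho=\rho(G)\ge\rho^{\ast}$. The core step is to show $d_G(u)=n-1$. Using $\rho^{2}x_{u}=\sum_{w\in N(u)}\sum_{z\in N(w)}x_{z}$ together with the planar bound $e(G)\le 3n-6$ and the quantitative lower bound on $\rho^{\ast}$, I would first establish $d_G(u)\ge n-O(k^{2})$ and a moment estimate $\sum_{v\ne u}x_v^{2}\le C(k)$. Partition the non-neighbours of $u$ via a threshold $\varepsilon$ into a \emph{heavy} set $L$ and a \emph{light} set $S$; the size of $L$ is polynomial in $k$ by combining the moment estimate with Theorem~\ref{thm1.1} applied to the induced subgraph on $N[u]$. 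Finally, if some $v\notin N(u)\cup\{u\}$ exists, carry out a local modification at $v$ (for example, redirect every edge incident to $v$ to $u$, or swap an edge $ab$ with both endpoints in $S$ for the new edge $uv$): since $x_u=1$ dominates the $x$-weight that is lost, the Rayleigh quotient $x^{\top}Ax/\|x\|_2^2$ strictly increases, and one can choose the swap so that planarity and the ``no $k$ edge-disjoint cycles'' constraint are preserved, contradicting the maximality of $\rho$.

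Once $u$ is a dominating vertex, Theorem~\ref{thm1.1} yields $e(G)\le n+3k-4$; combined with monotonicity of $\rho$ under edge insertion and with $\rho(G)\ge\rho^{\ast}$, equality must hold, so $G=K_1\nabla H$ with $t(H)=k-1$ and every non-trivial component of $H$ a triangle-cactus. A Kelmans-type comparison then identifies $H$: if $H$ had two non-trivial components $C_1,C_2$, or a triangle-cactus component that is not itself a friendship graph, then transplanting a triangle onto a Perron-heaviest vertex of the ``heavier'' component keeps the graph in the class and strictly raises $\rho$. Iterating reduces $H$ to $F_{k-1}\cup(n-2k)K_1$ and $G$ to $G^{\ast}$. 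The main obstacle is the dominating-vertex step, where one must simultaneously control the spectral gain of each swap, planarity after the swap, and the global cycle constraint; the hypothesis $n\ge 144k^{4}$ almost certainly enters here to guarantee that the light set $S$ is always large enough to supply an admissible edge while $L$ stays small.
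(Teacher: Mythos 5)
Your plan diverges from the paper at the crucial dominating-vertex step, and as written that step has genuine gaps. First, the local modifications you propose do not obviously stay inside the class or gain spectral radius: ``redirect every edge incident to $v$ to $u$'' adds many edges at $u$ and can both create new (edge-disjoint) cycles through $u$ and destroy planarity; ``swap an edge $ab$ with $a,b\in S$ for the new edge $uv$'' need not preserve planarity (adding $uv$ after deleting an unrelated edge is not automatically planar), and the gain $x_ux_v-x_ax_b$ need not be positive, since a non-neighbour $v$ of $u$ has no a priori lower bound on $x_v$ comparable to $\varepsilon^2$. Second, your intermediate estimates $d_G(u)\ge n-O(k^2)$ and $\sum_{v\ne u}x_v^2\le C(k)$ do not follow from the identity $\rho^2x_u=\sum_{w\in N(u)}\sum_{z\in N(w)}x_z$ together with $e(G)\le 3n-6$ alone; planarity by itself allows $\sum_{w\sim u}d_w$ to be of order $n$ with $d_u$ small, so the ``no $k$ edge-disjoint cycles'' hypothesis must enter, and you never say how. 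The paper's route is much more economical here: Corollary \ref{coro2.2} (a $2$-connected graph of maximum degree $\Delta$ has at least $\lfloor\Delta/2\rfloor$ edge-disjoint cycles) forces every block of the extremal graph to have maximum degree at most $2k-1$, whence every non-apex Perron entry is at most $x_{u^*}/(\rho-2k+2)$; then for a non-neighbour $v$ one deletes \emph{all} edges at $v$ and adds the single pendant edge $u^*v$, which trivially preserves planarity and $\phi\le k-1$ and strictly increases $\rho$ once $\rho>4k-3$ (Lemma \ref{lemma4.1}). If you want to keep the Tait--Tobin heavy/light machinery you must supply exactly this kind of uniform control, which is the missing content.

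There is a second gap after domination is established: from ``$u$ dominating'' and Theorem \ref{thm1.1} you conclude $e(G)=n+3k-4$ by ``monotonicity of $\rho$ under edge insertion'', but the spectral extremal graph could a priori be edge-\emph{maximal} in the class (no single edge can be added without violating planarity or creating $k$ edge-disjoint cycles) while failing to be edge-\emph{maximum}, so monotonicity gives nothing. The paper closes this with Lemma \ref{lemma4.2}: a Rayleigh-quotient comparison against an edge-maximum graph, using the two-sided bounds $x_{u^*}/\rho\le x_v\le x_{u^*}/(\rho-2k+2)$ and $e(H)=3k-3$, valid precisely because $n\ge144k^4$ makes $\rho>12(k-1)^2$; some quantitative argument of this type is required in your write-up as well. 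Your final step (transplanting triangles onto the Perron-heaviest non-apex vertex) does match the paper's concluding rotation argument in spirit, though you still need the strictness discussion (via the eigenvector equation) to rule out equality after the rotation.
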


The case $k=1$ is trivial in Theorems \ref{thm1.1} and \ref{thm1.2}.
Hence, we focus on $k\geq2$ in the subsequent sections.
In section \ref{2}, we prove some useful lemmas.
In Sections \ref{3} and \ref{4}, we give the proofs of Theorems \ref{thm1.1} and \ref{thm1.2} respectively.
In Section \ref{5}, we further investigate several related extremal problems without the assumption graphs being planar.

\section{Preliminaries}\label{2}

In this section, we shall give some lemmas.
Let $\phi(G)$ be the maximum number of edge-disjoint cycles in
a graph $G$.
The following lemma
gives the value of $\phi(G)$ for a graph $G$
obtained by joining a vertex with a triangle-cactus.

\begin{lem}\label{lemma2.1}
If $H$ is a triangle-cactus,
then $K_1\nabla H$ is a planar graph and $\phi(K_1\nabla H)=t(H)$.
\end{lem}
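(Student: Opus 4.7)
The plan is to verify planarity of $K_1\nabla H$ and the equality $\phi(K_1\nabla H)=t(H)$ separately, splitting the latter into its lower and upper bounds.

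For planarity I would use that every cactus is outerplanar: inductively glue the triangle blocks at their common cut vertices, keeping all vertices on the outer face. Fixing such an outerplanar embedding of $H$, I then place $v$ in the outer face and draw the edges $vu$ for $u\in V(H)$ without crossings, obtaining a planar embedding of $K_1\nabla H$. The lower bound $\phi(K_1\nabla H)\ge t(H)$ is immediate, because the $t(H)$ triangle blocks of $H$ are pairwise edge-disjoint cycles of $K_1\nabla H$.

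For the upper bound I would induct on $t=t(H)$. The base $t=1$ is $\phi(K_4)=1$, which holds since any two distinct triangles in $K_4$ share an edge and $K_4$ has only six edges. For $t\ge 2$, I pick a leaf block $T=\{a,b,c\}$ of $H$ with $a$ the cut vertex shared with the rest of $H$, and set $H'=H-\{b,c\}$, which is a triangle-cactus with $t-1$ triangles. Let $\mathcal{C}$ be any collection of edge-disjoint cycles in $K_1\nabla H$, and call the five edges $vb,vc,ab,ac,bc$ the \emph{leaf edges}. Since all three edges incident to $b$ (and all three at $c$) are leaf edges, any cycle that uses a leaf edge must use at least two of them, so $\mathcal{C}$ contains at most two leaf-touching cycles. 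When there are at most one, deleting it leaves a collection of edge-disjoint cycles in $K_1\nabla H'$, and induction gives $|\mathcal{C}|\le 1+(t-1)=t$.

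The main obstacle is the case of exactly two leaf-touching cycles $C_1,C_2$, where I cannot afford to lose more than one cycle when passing to $K_1\nabla H'$. A short case analysis on the possible disjoint pairs of valid leaf-edge subsets (enumerating which subsets of $\{vb,vc,ab,ac,bc\}$ can occur as the leaf trace of a single simple cycle) narrows the possibility to $C_1$ using exactly $\{vb,ab\}$ and $C_2$ using exactly $\{vc,ac\}$; other configurations either share a leaf edge or force three edges at $b$ or $c$. Then $C_1$ and $C_2$ both pass through $v$ and $a$, and what remains after deleting leaf edges from $E(C_1)\cup E(C_2)$ is a pair of edge-disjoint paths $P_1,P_2$ from $a$ to $v$ in $K_1\nabla H'$. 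The union $E(P_1)\cup E(P_2)$ is an even-degree subgraph of $K_1\nabla H'$, so it decomposes into at least one edge-disjoint cycle; replacing $C_1,C_2$ by these cycles yields a collection $\mathcal{C}'\subseteq K_1\nabla H'$ with $|\mathcal{C}'|\ge |\mathcal{C}|-1$, and induction yields $|\mathcal{C}|\le t$ as required.
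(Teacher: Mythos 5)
Your proof is correct, but your inductive step takes a genuinely different route from the paper's. The paper also inducts on $t(H)$, but its case split is global: given $s$ edge-disjoint cycles in $K_1\nabla H$, either some cycle is a triangle of $H$ (the only cycles of $H$, since every block is a triangle), in which case its edge set is deleted from $H$ and the induction hypothesis applied to the resulting smaller triangle-cactus components, or else every cycle passes through the apex $u^*$, and then a degree count at $u^*$ together with the identity $v(H)=2t+1$ gives $2s\le d(u^*)=2t+1$, hence $s\le t$. You instead localize: delete a leaf block $\{a,b,c\}$, observe that the five edges meeting $b$ or $c$ can support at most two of the given cycles, and in the delicate two-cycle case identify the unique admissible pair of leaf traces $\{vb,ab\}$ and $\{vc,ac\}$ and reroute via the even subgraph formed by the two residual $a$--$v$ paths to recover at least one cycle in $K_1\nabla H'$. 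Both arguments are sound; the paper's dichotomy is shorter and avoids any case analysis (at the price of the small counting fact $v(H)=2t+1$), while your leaf-block surgery is more hands-on and self-contained but hinges on the trace enumeration and the parity/rerouting step, which you only sketch but whose stated conclusion is correct. Your planarity argument via outerplanarity of the cactus and your lower bound via the $t(H)$ edge-disjoint triangle blocks coincide with the paper's.
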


\begin{proof}
Since $H$ is a triangle-cactus, we can easily see that
$K_1\nabla H$ is a planar graph.
Moreover, $H$ has $t(H)$ edge-disjoint triangles
and contains no any other cycles.
Consequently, $\phi(K_1\nabla H)\geq \phi(H)=t(H)$.
In the following, it suffices to show $\phi(K_1\nabla H)\leq t(H)$.
We will show this result by induction on $t(H)$.
If $t(H)=1$, then $H$ itself is a triangle.
It is clear that $\phi(K_1\nabla H)=\phi(K_4)=1$, as desired.
Now suppose that $t(H)=t\geq2$ and $\phi(K_1\nabla H)=s\geq t$.
Let $C^1,C^2,\ldots,C^s$ be $s$ edge-disjoint cycles in $K_1\nabla H$.
Since $H$ is a triangle-cactus with $t$ triangles,
we have $v(H)=2t+1$ (this can be easily shown by induction on $t$).

If there exists a cycle, say $C^1$, in $\{C^1,C^2,\ldots,C^s\}$ such that it is a triangle of $H$, then we define $H'=H-E(C^1)$.
One can see that $H'$ is the union of some triangle-cactus  components and at most two isolated vertices. Since $t(H')=t(H)-1$, we have $\phi(K_1\vee H')=t(H')=t-1$ by induction hypothesis. Now $C^2, C^3,\ldots,C^s$ are $s-1$ edge-disjoint cycles of $K_1\vee H'$ and thus $t-1=\phi(K_1\vee H')\ge s-1$, which implies that $s\leq t$, that is, $\phi(K_1\nabla H)\leq t(H)$.
Otherwise, every cycle of $C^1,C^2,\ldots,C^s$ contains $u^*$ as its vertex, where $u^*\in V(K_1\nabla H)\setminus V(H)$.
It follows that $v(H)=d_{K_1\nabla H}(u^*)\geq 2s$. Recall that $v(H)=2t+1$.
Therefore, we also obtain $s\leq t$, as required.
\end{proof}

\begin{lem}\label{lemma2.2}
Let $G$ be a graph obtained from an isolated vertex $u^*$ and a tree $T$ by adding $t$ edges between $u^*$ and $T$, where $1\leq t\leq v(T)$. Then $\phi(G)= \big\lfloor\frac{t}{2}\big\rfloor$.
\end{lem}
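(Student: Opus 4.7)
The plan is to prove equality by matching an easy upper bound with a constructive lower bound. For the upper bound, since $T$ is acyclic every cycle of $G$ must contain $u^*$ and therefore use exactly two of the $t$ edges incident to $u^*$; edge-disjoint cycles consume disjoint pairs from this pool, so $\phi(G) \le \lfloor t/2\rfloor$ immediately.

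For the lower bound, I would construct $\lfloor t/2\rfloor$ edge-disjoint cycles by induction on $v(T)$. Set $S = N_T(u^*)$, so $|S|=t$. Any leaf of $T$ lying outside $S$ may be pruned without affecting $t$ or $S$, so assume every leaf of $T$ belongs to $S$. Pick such a leaf $\ell$ and walk inward along the unique path $\ell, u_1, u_2, \ldots$ until the first vertex $u_{i^*}$ that either belongs to $S$ or has $d_T(u_{i^*}) \ge 3$. If $u_{i^*} \in S$, pair $\ell$ with $u_{i^*}$ to form the cycle $C = u^* \ell u_1 \cdots u_{i^*} u^*$; after removing $E(C)$ and the now-isolated twig $\{\ell, u_1, \ldots, u_{i^*-1}\}$, the leftover configuration has the same shape on a smaller tree with $t-2$ edges to $u^*$, and induction supplies $\lfloor (t-2)/2 \rfloor$ further cycles.

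The main obstacle is the other case, $u_{i^*}\notin S$ with $d_T(u_{i^*})\ge 3$: here $\ell$ has no partner in $S$ close enough to avoid a branching vertex, and naive twig-deletion severs the tree. I plan to bypass the case split entirely by recasting the construction as a post-order DFS on $T$ rooted at any vertex: at each $v$ processed bottom-up, collect the (at most one per child) unmatched pending $S$-vertex inherited from each child's subtree, together with $v$ itself if $v \in S$; greedily match these off, each pair producing a cycle through $v$ as the concatenation of two tree-paths in distinct subtrees of $v$'s children; pass the at-most-one leftover vertex up to $v$'s parent. Edge-disjointness is immediate at $v$ because paired endpoints lie in different subtrees of $v$, and holds globally because a vertex passed up reaches its eventual ancestor-match along edges confined to the subtree from which it came, a region untouched by any cycle already built below. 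The process therefore yields exactly $\lfloor t/2\rfloor$ pairs (one vertex is left over iff $t$ is odd), which completes the proof.
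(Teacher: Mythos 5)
Your proof is correct, but it takes a genuinely different route from the paper's. Both proofs get the upper bound $\phi(G)\le\lfloor t/2\rfloor$ the same way (every cycle passes through $u^*$ and uses two of the $t$ edges at $u^*$). For the lower bound, the paper argues by induction on $t$: it first passes to a smallest subtree $T'$ containing $N_G(u^*)$ (so every leaf of $T'$ is a neighbor of $u^*$), and then splits into three cases (a pendant path whose far end is a neighbor of $u^*$; a starlike $T'$; at least two branching vertices), in the latter two cases deleting a pair of pendant paths at a branching vertex to create one cycle and recurse. Your bottom-up DFS pairing builds all $\lfloor t/2\rfloor$ cycles in one pass and avoids the minimal-subtree reduction and the case analysis entirely; in effect you are reproving the standard fact that an even number of marked vertices in a tree can be paired so that the connecting paths are pairwise edge-disjoint, which is the cleaner and more reusable statement. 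The one place your write-up should be tightened is the global edge-disjointness claim: the phrase ``a region untouched by any cycle already built below'' is not literally true (cycles are built inside the subtree the pending vertex comes from); the correct justification is the invariant you already set up, namely that each child subtree $T_c$ passes up at most one pending vertex, so the tree edge from $c$ to its parent is traversed only by the (at most one) cycle that eventually matches that vertex, and hence each tree edge, like each $u^*$-edge, lies on at most one constructed cycle. With that sentence made precise, your argument is complete, and the count $\lfloor t/2\rfloor$ follows since at most one marked vertex remains unmatched at the root.
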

\begin{proof} Since $T$ is a tree, any cycle of $G$ must contains the vertex $u^*$ and thus $\phi(G)\le \big\lfloor\frac{t}{2}\big\rfloor$.
Next, we will show $\phi(G)\ge \big\lfloor\frac{t}{2}\big\rfloor$ by induction on $t$. Clearly, the result holds for $1\le t\le 2$. Next, assume that $t\geq3$.

Note that $N_G(u^*)\subseteq V(T)$.
We may choose $T'$ as a smallest subtree of $T$ such that $N_G(u^*)\subseteq V(T')$.
Let $L(T')$ be the set of leaves of $T'$.
We will see that $L(T')\subseteq N_G(u^*)$.
Otherwise, there exists a leaf $u_0$ of $T'$ with $u_0\notin N_G(u^*)$.
One can observe that $N_G(u^*)\subseteq V(T'-u_0)$,
as we does not remove any vertex in $N_G(u^*)$.
However, $v(T'-u_0)<v(T')$, which contradicts the choice of $T'$.

Recall that $L(T')\subseteq N_G(u^*)\subseteq V(T')$ and $|N_G(u^*)|=t$.
Let $G'$ be the subgraph of $G$ induced by the vertex subset $V(T')\cup \{u^*\}$.
Then $N_{G'}(u^*)=N_{G}(u^*)$, and so $L(T')\subseteq N_{G'}(u^*)$.
In the following, it suffices to show that $\phi(G')\ge \big\lfloor\frac{t}{2}\big\rfloor$, as $\phi(G)\ge \phi(G')$.
Since $L(T')\subseteq N_{G'}(u^*)$, we shall distinguish the subsequent proof into three cases.

\vspace{1mm}
\noindent{{\bf{Case 1.}}} $T'$ contains a path $P=u_1u_2\ldots u_s$ ($s\geq2$) such that $d_{T'}(u_1)=1$, $d_{T'}(u_2)=\cdots=d_{T'}(u_s)=2$ and $u_s\in N_{G'}(u^*)$.

Since $u_1$ is a leaf of $T'$,
we have $u_1\in N_{G'}(u^*)$.
Note that $P$ is a pendant path.
We can define $T''$ as the subtree obtained from $T'$ by deleting $P$,
and define $G''$ as the subgraph of $G'$ induced by $V(T'')\cup \{u^*\}$.
We may assume without loss of generality that $P$ is shortest.
Then $u_i\notin N_{G'}(u^*)$ for each $u_i\in V(P)\setminus \{u_1,u_s\}$.
It follows that $|N_{G''}(u^*)|=|N_{G'}(u^*)\setminus\{u_1,u_s\}|=t-2.$
By the induction hypothesis, $\phi(G'')\ge \lfloor\frac{t-2}{2}\rfloor.$
Since $E(P)\cup \{u^*u_1,u^*u_s\}$ induces a cycle,
we obtain $\phi(G')\geq 1+\phi(G'')\geq \lfloor\frac{t}{2}\rfloor$.

Particularly, if $T'$ itself is a path, we are done by Case 1.
Next, we may assume that $T'$ contains at least one branching vertex
(i.e., a vertex of degree greater than two).

\vspace{1mm}
\noindent{{\bf{Case 2.}}} $T'$ is a starlike tree.

Now $T'$ has a unique branching vertex, say $v$.
For $i\in\{1,2\}$,
let $u_i$ be a leaf of $T'$ and $P_{u_i}$ be the pendant path from $v$ to $u_i$ in $T'$.
Then by Case 1, we may assume that every 2-vertex in $P_{u_1}$ and $P_{u_2}$ does not
belong to $N_{G'}(u^*)$.
We now define $T''$ as the subtree obtained from $T'$ by deleting two pendant paths $P_{u_1}$ and $P_{u_2}$ (where $v$ is reserved),
and define $G''$ as the subgraph of $G'$ induced by $V(T'')\cup \{u^*\}$.
Note that $u_1,u_2\in L(T')\subseteq N_{G'}(u^*)$. Then $|N_{G''}(u^*)|=|N_{G'}(u^*)\setminus\{u_1,u_2\}|=t-2,$
and $E(P_{u_1})\cup E(P_{u_2})\cup \{u^*u_1,u^*u_2\}$ induces a cycle.
By the induction hypothesis,
we have $\phi(G')\geq 1+\phi(G'')\geq \lfloor\frac{t}{2}\rfloor$.

\vspace{1mm}
\noindent{{\bf{Case 3.}}} $T'$ contains at least two branching vertices.

Now we choose two branching vertices $v$ and $w$ of $T'$ such that the distance between $v$ and $w$ is as large as possible.
Then there are at least two pendant paths $P_{u_1}$ and $P_{u_2}$ attaching at $v$,
where $u_1,u_2$ are two leaves.
Note that $u_1,u_2\in N_{G'}(u^*)$, and by Case 1, we may assume that every 2-vertex in $P_{u_1}$ and $P_{u_2}$ does not belong to $N_{G'}(u^*)$.
Define $T''$ and $G''$ similarly as in Case 2.
One can also get $\phi(G')\geq 1+\phi(G'')\geq \lfloor\frac{t}{2}\rfloor$.
\end{proof}

From Lemma \ref{lemma2.2}, we can easily obtain
the exact value of $\phi(G)$ for a graph $G$
obtained by joining a vertex with a tree.

\begin{cor}\label{coro2.1}
Let $T$ be a tree. Then
$\phi(K_1\nabla T)=\big\lfloor\frac{v(T)}{2}\big\rfloor$.
\end{cor}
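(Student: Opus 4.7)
The plan is to derive this corollary as an immediate specialization of Lemma \ref{lemma2.2}. Observe that by the definition of the join operation, $K_1\nabla T$ is precisely the graph obtained from a single extra vertex $u^*$ and the tree $T$ by adding an edge between $u^*$ and every vertex of $T$. In the language of Lemma \ref{lemma2.2}, this corresponds to the extremal choice $t=v(T)$, which clearly satisfies the hypothesis $1\leq t\leq v(T)$ whenever $T$ is nonempty.

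First I would check the degenerate boundary: if $v(T)=0$ then $K_1\nabla T=K_1$ contains no cycles, so $\phi(K_1\nabla T)=0=\lfloor v(T)/2\rfloor$ and the statement holds trivially. Assuming $v(T)\geq 1$, I would then invoke Lemma \ref{lemma2.2} with $t=v(T)$ to conclude directly that
\[
\phi(K_1\nabla T)=\left\lfloor\frac{v(T)}{2}\right\rfloor.
\]

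Since the lemma already delivers both the upper bound (any cycle in $K_1\nabla T$ must pass through $u^*$, consuming two of the $v(T)$ edges incident to $u^*$) and the matching lower bound (through the inductive construction in Cases 1--3), there is no further work required. The only point one might want to double-check is that the hypothesis $t\leq v(T)$ is tight but valid, which it is by construction. Consequently I do not anticipate any real obstacle: the corollary is a one-line application of the preceding lemma together with a trivial check of the empty-tree case.
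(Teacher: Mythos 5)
Your proposal is correct and coincides with the paper's intended argument: the corollary is stated as an immediate consequence of Lemma \ref{lemma2.2}, obtained exactly by taking $t=v(T)$ (every vertex of $T$ joined to the extra vertex $u^*$). The extra check of the empty tree is harmless but unnecessary, since a tree has at least one vertex, so the hypothesis $1\leq t\leq v(T)$ holds automatically.
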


Furthermore, we can deduce a result on general 2-connected graphs instead of $K_1\nabla T$.

\begin{cor}\label{coro2.2}
Let $G$ be a 2-connected graph.
Then $\phi(G)\ge\big\lfloor\frac{\Delta(G)}{2}\big\rfloor$, where $\Delta(G)$ is the maximum degree of $G$.
\end{cor}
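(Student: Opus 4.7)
The plan is to reduce Corollary \ref{coro2.2} directly to Lemma \ref{lemma2.2} by exhibiting inside any $2$-connected graph $G$ a subgraph of the special form considered in that lemma.

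First I would pick a vertex $u^*\in V(G)$ with $d_G(u^*)=\Delta(G)$. Since $G$ is $2$-connected, $u^*$ is not a cut vertex, so $G-u^*$ is connected and therefore has a spanning tree $T$ on $V(G)\setminus\{u^*\}$. Let $G'$ be the subgraph of $G$ obtained from $T$ by adding the vertex $u^*$ together with all edges of $G$ joining $u^*$ to $V(T)$. Then $G'$ is precisely a tree $T$ together with $t:=d_G(u^*)=\Delta(G)$ edges from an extra vertex $u^*$ to $T$.

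Next I would verify the hypotheses of Lemma \ref{lemma2.2}: we have $v(T)=v(G)-1\ge \Delta(G)=t\ge 1$ (using $v(G)\ge 3$ for $2$-connected graphs), so $1\le t\le v(T)$. Hence Lemma \ref{lemma2.2} yields $\phi(G')=\lfloor \Delta(G)/2\rfloor$. Since edge-disjoint cycles in $G'$ remain edge-disjoint cycles in $G$, we obtain $\phi(G)\ge \phi(G')=\lfloor \Delta(G)/2\rfloor$, as desired.

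There is essentially no obstacle: the only thing to be a little careful about is invoking $2$-connectivity to guarantee that $G-u^*$ is connected, which is exactly what allows a spanning tree $T$ of $G-u^*$ to exist and turns $G'$ into the configuration handled by Lemma \ref{lemma2.2}.
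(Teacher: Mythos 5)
Your proposal is correct and matches the paper's own proof essentially verbatim: both take a maximum-degree vertex $u^*$, use $2$-connectivity to get a spanning tree $T$ of $G-u^*$, form the subgraph $T$ plus all edges at $u^*$, and apply Lemma \ref{lemma2.2} together with monotonicity of $\phi$ under subgraphs. Your added check that $1\le \Delta(G)\le v(T)$ is a harmless extra verification of the lemma's hypotheses.
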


\begin{proof} Let $u^*$ be a vertex of $G$ with $d_G(u^*)=\Delta(G)$. Since $G$ is 2-connected, $G-u^*$ is connected and thus $G-u^*$ contains a spanning tree $T$ which contains $\Delta(G)$ neighbors of $u^*$. Now let $G'$ be the subgraph of $G$ induced
by $E(T)$ and all edges incident to $u^*$.
Then $\phi(G')=\big\lfloor\frac{\Delta(G)}{2}\big\rfloor$ by Lemma \ref{lemma2.2}.
Since $G'\subseteq G$, we have $\phi(G)\geq\phi(G')=\big\lfloor\frac{\Delta(G)}{2}\big\rfloor$.
\end{proof}

\section{Proof of Theorem \ref{thm1.1}}\label{3}

In this section, we shall determine the maximum number of edges in
a planar graph of order $n$ and maximum degree $n-1$ without $k$ edge-disjoint cycles.
For the reason of conciseness, we decompose the proof into three claims.

\begin{claim}\label{claim3.1}
Let $k\geq2$ and $G$ be a planar graph with $\phi(G)\leq k-1$.
If $G$ contains a dominating vertex $u^*$ and $G-u^*$ is triangle-free,
then $e(G-u^*)\le 3k-4$.
\end{claim}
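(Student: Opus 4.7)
Let $H := G - u^*$, so that $G = K_1 \nabla H$. My plan is to reduce the bound on $e(H)$ to a component-by-component estimate on $H$, combining an Euler-formula face count with the spanning-tree cycle bound from Section \ref{2}.

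First, for each connected component $H_i$ of $H$ with $n_i := |V(H_i)| \geq 2$, I would observe that the subgraph $K_1 \nabla H_i$ of $G$ is $2$-connected: removing $u^*$ leaves $H_i$ connected, and removing any other vertex still leaves the rest joined through $u^*$. Moreover, any cycle of $G$ that touches $u^*$ must proceed along a path in a single component of $H$, and any cycle avoiding $u^*$ already lies in one $H_i$. Hence the subgraphs $K_1 \nabla H_i$ are pairwise edge-disjoint (they share only $u^*$), so $\phi(G) = \sum_i \phi(K_1 \nabla H_i)$.

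Next, for each non-trivial component, with $m_i := |E(H_i)|$, I would establish both
\[
\phi(K_1 \nabla H_i) \geq \left\lfloor \tfrac{n_i}{2} \right\rfloor \qquad \text{and} \qquad m_i \leq \tfrac{3 n_i - 4}{2}.
\]
The first is immediate from Corollary \ref{coro2.1} applied to $K_1 \nabla T_i \subseteq K_1 \nabla H_i$ for any spanning tree $T_i$ of $H_i$. The second uses Euler's formula on the $2$-connected planar graph $K_1 \nabla H_i$: it has $n_i + 1$ vertices, $n_i + m_i$ edges, and hence $m_i + 1$ faces. By $2$-connectedness, $u^*$ lies on exactly $n_i$ distinct faces, each of length at least $3$, while the remaining $m_i - n_i + 1$ faces are bounded by cycles of $H_i$ and therefore have length at least $4$ by the triangle-free hypothesis. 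The face-length inequality $3 n_i + 4(m_i - n_i + 1) \leq 2(n_i + m_i)$ then simplifies to $m_i \leq (3 n_i - 4)/2$.

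Combining, $n_i \leq 2\phi(K_1 \nabla H_i) + 1$ yields $m_i \leq 3 \phi(K_1 \nabla H_i) - \tfrac{1}{2}$, and hence $m_i \leq 3\phi(K_1 \nabla H_i) - 1$ since $m_i$ is an integer. Summing over the $c^+$ non-trivial components,
\[
e(H) = \sum_{n_i \geq 2} m_i \leq 3 \phi(G) - c^+ \leq 3(k - 1) - c^+ \leq 3k - 4
\]
whenever $c^+ \geq 1$; if instead $c^+ = 0$ then $e(H) = 0 \leq 3k - 4$ since $k \geq 2$, completing the argument. The main obstacle is the Euler-formula bound $m_i \leq (3n_i - 4)/2$, which hinges on two observations that must be verified carefully: the $2$-connectedness of $K_1 \nabla H_i$ (so that the $n_i$ face-corners at $u^*$ correspond to $n_i$ distinct faces) and the girth-$\geq 4$ lower bound on the faces not incident to $u^*$ coming from triangle-freeness. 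Everything else is a routine combination with Corollary \ref{coro2.1} and the edge-disjointness of the $K_1 \nabla H_i$.
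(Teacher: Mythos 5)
Your proof is correct, but it follows a genuinely different route from the paper's. The paper works globally with $G-u^*$: it first observes that $G-u^*$ has an outerplane embedding (all vertices on the outer face, since $u^*$ dominates and $G$ is planar), bounds the number $t$ of inner faces via an acyclic face--edge incidence graph (showing at most $t-1$ edges lie on two inner faces, hence $e(G-u^*)\geq 3t+1$ by triangle-freeness), then deletes one outer-boundary edge per inner face to reduce $G-u^*$ to a forest and applies the forest form of Corollary \ref{coro2.1} to get $e(G-u^*)\leq 2k-2+t$; combining yields $e(G-u^*)\leq 3k-\tfrac72$. You instead argue component by component: for each non-trivial component $H_i$ you exploit that $K_1\nabla H_i$ is a $2$-connected plane graph, so Euler's formula together with the face-length count (faces at $u^*$ have length at least $3$, the $m_i+1-n_i$ faces avoiding $u^*$ are cycles of the triangle-free $H_i$ and have length at least $4$) gives $m_i\leq\frac{3n_i-4}{2}$, while Corollary \ref{coro2.1} (equivalently Corollary \ref{coro2.2}) gives $n_i\leq 2\phi(K_1\nabla H_i)+1$, whence $m_i\leq 3\phi(K_1\nabla H_i)-1$ and summing over the edge-disjoint subgraphs $K_1\nabla H_i$ gives $e(G-u^*)\leq 3(k-1)-c^+\leq 3k-4$. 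The two crux points you flag are indeed the load-bearing ones and both are standard facts about $2$-connected plane graphs (face boundaries are cycles, so the faces incident to $u^*$ correspond bijectively to the $d(u^*)=n_i$ corners at $u^*$), and the integrality step from $3\phi_i-\tfrac12$ to $3\phi_i-1$ is what recovers the ``$-4$''. Your version avoids both the outerplanarity discussion and the edge-deletion reduction to a forest, is more local, and even yields the slightly sharper bound $e(G-u^*)\leq 3(k-1)-c^+$ in terms of the number $c^+$ of non-trivial components; the paper's global face count, on the other hand, needs no component decomposition and no appeal to properties of $2$-connected embeddings beyond outerplanarity.
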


\begin{proof}
We can observe that $G-u^*$ has an outerplane embedding,
more precisely, all vertices of $G-u^*$ lie on its outer face
(otherwise, $G-u^*$ has either a $K_{2,3}$-minor or a $K_4$-minor,
correspondingly, $G$ contains a $K_{3,3}$-minor or a $K_5$-minor,
a contradiction).
Assume that $G-u^*$ contains $q$ components $G_1,G_2,\dots,G_q$.
If $G-u^*$ is acyclic, then
by Corollary \ref{coro2.1}, we have
\begin{align}\label{al001}
\phi(G)=\sum_{i=1}^{q}\phi(K_1\nabla G_i)=\sum_{i=1}^{q}\left\lfloor\frac{v(G_i)}2\right\rfloor\geq
\sum_{i=1}^{q}\frac{e(G_i)}2=\frac{e(G-u^*)}2.
\end{align}
Note that $\phi(G)\leq k-1$. Thus we obtain $e(G-u^*)\leq 2k-2\leq 3k-4$, as desired.

In the following, we assume that $G-u^*$ is not a forest,
then it has at least one inner face.
Suppose that $G-u^*$ contains exactly $t$ inner faces $F_1,F_2,\dots,F_t$, and exactly $s$ edges $e_1,e_2,\ldots,e_s$ incident with two inner faces, where $t\geq1$ and $s\geq0$.

We shall first prove that
$s\le t-1$.
We now define a bipartite graph $H$.
Let $V(H)=\{F_{i}^*: i=1,2,\ldots,t\}\cup \{e_{j}^*: j=1,2,\ldots,s\}$.
A vertex $F_{i}^*$ is adjacent to a vertex $e_{j}^*$ in $H$
if and only if $e_j$ lies on $F_i$ in $G-u^*$.
Then, $H$ is acyclic
(otherwise, we may assume without loss of generality that $F^*_1 e^*_1 F^*_2 e^*_2\cdots F^*_re^*_rF^*_1$ is a cycle. Then one endpoint of $e_1$ cannot lie on the outer face of $G-u^*$, a contradiction).
Now we have $e(H)\leq v(H)-1$.
Note that $v(H)=s+t$ and $e(H)=2s$.
It follows that $s\le t-1$.

Let $G'$ be the subgraph of $G-u^*$ consisting of vertices and edges incident with inner faces of $G-u^*$. Then
\begin{align*}
e(G-u^*)\ge e(G')=\sum_{i=1}^{t}e(F_i)-s\ge 4t-(t-1)=3t+1,
\end{align*}
as $G-u^*$ is triangle-free and $s\leq t-1$. Thus,
\begin{align}\label{a9}
t\le \frac{1}{3}e(G')-\frac{1}{3}\le \frac{1}{3}e(G-u^*)-\frac{1}{3}.
\end{align}

Note that every vertex $e_j^*$ is of degree two in $H$.
Since $H$ is acyclic, there exists a vertex of degree at most one, say $F_{i_1}^*$, in $H$. This implies that there exists an edge $e_1'$ lying on $F_{i_1}$ and the outer face of $G-u^*$, that is, $e_1'\notin\{e_1,e_2,\ldots,e_s\}$.
Let $G'_1:=G-u^*-e_1'$. Clearly, $G'_1$ contains $t-1$ inner faces.
If $t\geq2$, then we repeat this step to obtain a sequence of graphs $G'_1,G'_2,\ldots,G'_t$ such that $G'_i=G'_{i-1}-e_i'$ and it contains
exactly $t-i$ inner faces for each $i\in\{2,3,\ldots,t\}$.
Consequently, $e(G'_t)=e(G-u^*)-t$ and $G'_t$ is acyclic.
By (\ref{al001}), we have
\begin{align*}
k-1\ge \phi(G)\ge \phi(K_1\nabla G'_t)\ge \frac{e(G'_t)}{2}=\frac{e(G-u^*)-t}{2}.
\end{align*}
It follows that $e(G-u^*)\le 2k-2+t$.
Combining (\ref{a9}), we obtain $e(G-u^*)\le 3k-\frac{7}{2}$, that is, $e(G-u^*)\le 3k-4$, as claimed.
\end{proof}

\begin{claim}\label{claim3.2}
Let $k\geq 1$ and $G$ be a planar graph with $\phi(G)\leq k-1$.
If $G$ contains a dominating vertex $u^*$,
then $e(G-u^*)\le 3k-3$,
where the equality implies that $G-u^*$ can be decomposed into exactly $k-1$ edge-disjoint triangles.
\end{claim}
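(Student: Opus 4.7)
The plan is to argue by induction on $k$, using Claim \ref{claim3.1} to handle the case when $G-u^*$ is triangle-free and peeling off triangles otherwise.

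\textbf{Base case.} When $k=1$ we have $\phi(G)\le 0$, so $G$ is acyclic. Since $u^*$ is dominating, any edge $xy$ in $G-u^*$ would give a triangle $u^*xy$ in $G$; hence $e(G-u^*)=0=3k-3$, and the ``decomposition into $0$ triangles'' condition is vacuous.

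\textbf{Inductive step.} Assume the claim holds for $k-1$ and consider $G$ with $\phi(G)\le k-1$ and a dominating vertex $u^*$. First, if $G-u^*$ is triangle-free, Claim \ref{claim3.1} immediately gives $e(G-u^*)\le 3k-4<3k-3$, so the bound holds strictly and there is nothing to verify in the equality clause. Otherwise, pick a triangle $C$ in $G-u^*$ and set $G':=G-E(C)$. The vertex $u^*$ remains dominating in $G'$ (we only removed edges of $G-u^*$). The key observation is that $\phi(G')\le k-2$: indeed, if $G'$ contained $k-1$ edge-disjoint cycles, then together with $C$ (which is edge-disjoint from all of them by construction) we would obtain $k$ edge-disjoint cycles in $G$, contradicting $\phi(G)\le k-1$. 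Applying the induction hypothesis to $G'$ yields
\begin{equation*}
e(G-u^*)=e(G'-u^*)+3\le \bigl(3(k-1)-3\bigr)+3=3k-3.
\end{equation*}

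\textbf{Equality case.} Suppose $e(G-u^*)=3k-3$. By Claim \ref{claim3.1} applied as above, $G-u^*$ must contain a triangle $C$; then $e(G'-u^*)=3k-6=3(k-1)-3$, so the induction hypothesis forces $G'-u^*$ to decompose into exactly $k-2$ edge-disjoint triangles. Appending $C$ to this decomposition realizes $G-u^*$ as a union of $k-1$ edge-disjoint triangles, as required.

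\textbf{Main obstacle.} The proof itself is short, but the one delicate point is the inequality $\phi(G-E(C))\le k-2$; it requires noting that an optimal packing in $G'$ together with the removed triangle $C$ remains edge-disjoint, which uses precisely that the edges of $C$ were deleted. Beyond this, one must be a little careful that in the equality analysis the triangle-free branch genuinely cannot contribute (which is why the bound in Claim \ref{claim3.1} is $3k-4$ rather than $3k-3$), so that every equality-achieving $G$ truly contains an extractable triangle to drive the induction.
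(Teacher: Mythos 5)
Your proof is correct and follows essentially the same strategy as the paper: peel edge-disjoint triangles off $G-u^*$ and reduce to the triangle-free case settled by Claim \ref{claim3.1}, with the equality forcing a triangle decomposition. The only difference is presentational — you package the peeling as an induction on $k$, whereas the paper removes all triangles in one explicit sequence and applies Claim \ref{claim3.1} once with the reduced parameter.
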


\begin{proof}
If $k=1$, then $G$ is a star and the claim trivially holds.
We now assume that $k\geq2$.
By Claim \ref{claim3.1},
it remains to consider the case $t(G-u^*)\geq1$.
Suppose that $G-u^*$ contains a triangle $C^{(1)}$.
Then we define $G_1':=G-u^*-E(C^{(1)})$.
If $t(G_1')\geq1$, then we repeat this step to
obtain a sequence of graphs $G'_1,G'_2,\ldots,G'_q$ such that

(i) $G'_i=G'_{i-1}-E(C^{(i)})$ for each $i\in\{2,3,\ldots,q\}$, where $C^{(i)}$ is a triangle in $G'_{i-1}$;

(ii) $G'_q$ contains no triangles.\\
Since $C^{(1)},C^{(2)},\ldots,C^{(q)}$ are edge-disjoint,
we have $q\leq\phi(G)\le k-1$ and $\phi(K_1\nabla G'_q)\le k-1-q$.
Note that $G'_q$ is triangle-free.
If $q\le k-2$, then $e(G'_q)\le 3(k-q)-4$ by Claim \ref{claim3.1}.
Therefore, $e(G-u^*)=e(G'_q)+3q\le 3k-4$, as desired.
If $q=k-1$, then $\phi(K_1\nabla G'_q)=0$.
Thus $e(G'_q)=0$ and $e(G-u^*)=e(G'_q)+3q=3k-3$.
Now $G-u^*$ can be decomposed into exactly $k-1$ edge-disjoint triangles
$C^{(1)},C^{(2)},\ldots,C^{(k-1)}$.
\end{proof}

\begin{claim}\label{claim3.3}
Let $k\geq 1$ and $G$ be a planar graph with $\phi(G)\leq k-1$.
If $G$ contains a dominating vertex $u^*$ and $e(G-u^*)=3k-3$,
then every non-trivial component of $G-u^*$ is a triangle-cactus
and $\phi(G)=t(G-u^*)=k-1$.

\end{claim}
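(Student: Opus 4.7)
The plan is as follows. First, apply Claim~\ref{claim3.2}: under the hypothesis $e(G-u^*)=3k-3$, that claim produces a decomposition of $H:=G-u^*$ into $k-1$ edge-disjoint triangles $C^{(1)},\ldots,C^{(k-1)}$. These are $k-1$ edge-disjoint cycles of $G$, so $\phi(G)\ge k-1$; combined with the hypothesis $\phi(G)\le k-1$ this forces $\phi(G)=k-1$.

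Next, I would let $H_1,\ldots,H_q$ denote the components of $H$. Since $u^*$ is the unique common vertex of the subgraphs $K_1\nabla H_i$, every cycle of $G$ (being $2$-connected) is confined to a single $K_1\nabla H_i$, yielding
\[
\phi(G)=\sum_{i=1}^{q}\phi(K_1\nabla H_i).
\]
Because each decomposition triangle lies in exactly one component, each $H_i$ also inherits a decomposition into $e(H_i)/3$ edge-disjoint triangles, so in particular $3\mid e(H_i)$.

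The pivotal step is to prove the following sub-lemma: for every component $H_i$ of $H$,
\[
\phi(K_1\nabla H_i)\ge e(H_i)/3,
\]
with equality if and only if $H_i$ is either an isolated vertex or a triangle-cactus. Granting this, summation gives $k-1=\phi(G)\ge e(H)/3=k-1$, so equality must hold in every summand; hence every non-trivial component of $H$ is a triangle-cactus. Applying Lemma~\ref{lemma2.1} componentwise then yields $\phi(K_1\nabla H_i)=t(H_i)$, whence $\phi(G)=t(H)=\sum_i e(H_i)/3=k-1$, completing the statement.

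The hard part is the sub-lemma, specifically the strict inequality when $H_i$ is non-trivial and not a triangle-cactus. Since each edge of $H_i$ sits in a decomposition triangle, no bridge blocks exist, so every block of $H_i$ is $2$-connected; if $H_i$ fails to be a triangle-cactus, some block $B$ must then have at least four vertices. Outerplanarity of $H_i$ (inherited from the planarity of $G=K_1\nabla H$) endows $B$ with a Hamilton outer cycle $v_1v_2\cdots v_m v_1$ of length $m\ge 4$. I would then combine the $\lfloor m/2\rfloor$ edge-disjoint spoke-triangles $u^*v_{2i-1}v_{2i}$ with one extra edge-disjoint cycle drawn either (i) from an unused chord of $B$, which together with the remaining Hamiltonian edges of the corresponding inner face yields an additional cycle, or (ii) when $B$ is chord-free, directly from the inequality $\lfloor m/2\rfloor>m/3$ valid for $m\ge 4$. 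A short case analysis on the chord structure of $B$ shows that this construction produces strictly more than $e(B)/3$ edge-disjoint cycles in $K_1\nabla B$; together with the bound $\phi(K_1\nabla H_j)\ge e(H_j)/3$ on the other components this contradicts $\phi(G)=k-1$, forcing $B$ to be a triangle after all.
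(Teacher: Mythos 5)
Your reduction is fine up to the sub-lemma: Claim~\ref{claim3.2} gives the triangle decomposition, cycles of $G$ split over the subgraphs $K_1\nabla H_i$, and Lemma~\ref{lemma2.1} finishes once every non-trivial component is a triangle-cactus. The gap is in the step you yourself call the hard part. Your construction produces at most $\lfloor m/2\rfloor+1$ edge-disjoint cycles in $K_1\nabla B$, where $m=v(B)$, while you need strictly more than $e(B)/3=(m+c)/3$, with $c$ the number of chords of the outer Hamiltonian cycle of $B$. The inequality $\lfloor m/2\rfloor+1>(m+c)/3$ fails as soon as $c\ge 3\lfloor m/2\rfloor-m+3$, and heavily chorded, triangle-decomposable, $2$-connected outerplanar blocks of this kind do exist. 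For example, let $B$ be the triangulated $9$-gon with outer cycle $a,p,d,q,b,r,e,s,c$ (in this cyclic order) and chords $ad,bd,ab,be,ce,bc$: its $15$ edges decompose into the five edge-disjoint triangles $apd$, $dqb$, $bre$, $esc$, $abc$, it is not a triangle-cactus, yet your construction yields only $\lfloor 9/2\rfloor+1=5=e(B)/3$ cycles, not strictly more. The required conclusion $\phi(K_1\nabla B)\ge 6$ is still true (take $bre$, $esc$, $abd$, $u^*pa$, $u^*cb$, $u^*qd$), but it is obtained by rerouting parts of several decomposition triangles through $u^*$, not by your count of spoke triangles plus one extra cycle; so the promised ``short case analysis on the chord structure'' cannot close the argument as stated, and the number of extra cycles one must extract grows with the chord structure, which is exactly where the difficulty of the claim lies.

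For comparison, the paper avoids any block-by-block counting. From the decomposition $E(G-u^*)=\bigcup_i E(C^{(i)})$ of Claim~\ref{claim3.2} it builds the bipartite incidence graph between the $k-1$ decomposition triangles and the vertices lying in at least two of them, and proves this graph is acyclic: a cycle through triangles $C^{(1)},\dots,C^{(r)}$ and shared vertices $v_1,\dots,v_r$ would yield $r+1$ edge-disjoint cycles in place of those $r$ triangles (the cycle $v_1v_2\cdots v_r$ inside $G-u^*$ together with $r$ triangles through the dominating vertex $u^*$), hence $k$ edge-disjoint cycles in $G$, a contradiction. Acyclicity, plus the fact that the triangles cover every edge, immediately forces each non-trivial component of $G-u^*$ to be a triangle-cactus, and Lemma~\ref{lemma2.1} gives $\phi(G)=t(G-u^*)=k-1$. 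This global rerouting is precisely the mechanism your single extra cycle misses; any repair of your approach would need an argument of comparable strength.
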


\begin{figure}[!ht]
	\centering
	\includegraphics[width=0.4\textwidth]{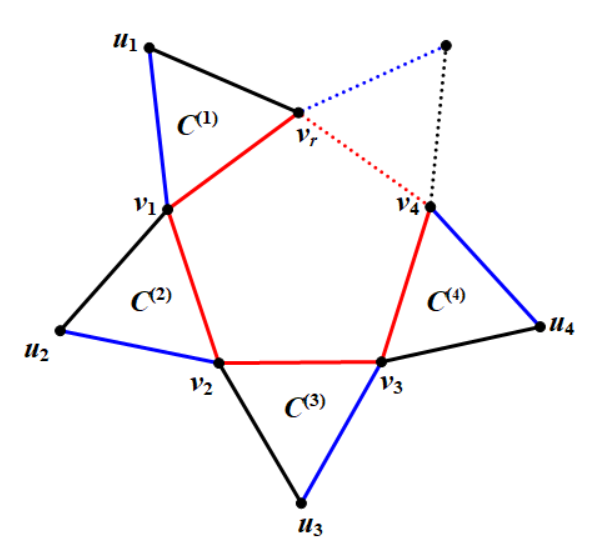}
	\caption{Local structure of $G-u^*$. }{\label{figu.5}}
\end{figure}

\begin{proof}
We only need  to  consider the case $k\geq2$.
By   Claim \ref{claim3.2}, if $e(G-u^*)=3k-3$, then $G-u^*$ contains $k-1$ edge-disjoint triangles $C^{(1)},C^{(2)},\dots,C^{(k-1)}$ such that $E(G-u^*)=\bigcup_{i=1}^r E(C^{(i)})$.

Assume that $G-u^*$ contains exactly $p$ vertices $v_1,v_2,\ldots,v_p$, each of which
belongs to at least two triangles.
We now define a bipartite graph $H^*$ with
vertex set $V(H^*)=\{u_i^*: i=1,2,\ldots,k-1\}\cup\{v_j^*: j=1,2,\ldots,p\}$.
A vertex $u_i^*$ is adjacent to a vertex $v_j^*$ in $H^*$ if and only if $v_j$
lies on the triangle $C^{(i)}$ in $G-u^*.$

We shall prove that $H^*$ is acyclic.
Suppose to the contrary that $u_1^*v^*_1u_2^*v^*_2\cdots u_r^*v^*_ru_1^*$ is
a cycle in $H^*$. Then $r\leq \min\{p,k-1\}$.
Now, we can find $k$ edge-disjoint cycles, which consist of $k-r-1$ triangles
$C^{(r+1)},C^{(r+2)},\dots,C^{(k-1)}$, $r$ triangles  $u^*u_1v_1u^*,\dots,u^*u_rv_ru^*$, and an $r$-cycle $v_1v_2\cdots v_rv_1$ (see Fig. \ref{figu.5}).
This contradicts the fact $\phi(G)\leq k-1$.

Since $H^*$ is acyclic and $E(G-u^*)=\bigcup_{i=1}^r E(C^{(i)})$,
one can observe that every component of $G-u^*$ is either an isolated vertex or a triangle-cactus, and thus $t(G-u^*)=k-1$.
By Lemma \ref{lemma2.1}, $\phi(G)=t(G-u^*)=k-1$.
The proof is complete.
\end{proof}

Combining Claim \ref{claim3.2} and Claim \ref{claim3.3},
we complete the proof of Theorem \ref{thm1.1}.
Theorem \ref{thm1.1}, together with Claims \ref{claim3.1}-\ref{claim3.3}, will be used in the subsequent section to characterize
the structure of spectral extremal graphs.

\section{Proof of Theorem \ref{thm1.2}}\label{4}

This section will be dedicated to the proof of Theorem \ref{thm1.2}.
Let $G_{n,k}$ be an extremal graph with maximum spectral radius over all planar graphs of order $n$ without $k$ edge-disjoint cycles. Then
$G_{n,k}$ must be connected. To see this, suppose to the contrary that $G_{n,k}$ is disconnected. Let $G_1, G_2$ be two components of $G_{n,k}$ so that $\rho(G_{n,k})=\rho(G_1)$, and let $G$ be obtained from $G_{n,k}$ by adding an edge between $G_1$ and $G_2$. Since $G_1$ is a proper subgraph of $G$, $\rho(G)>\rho(G_{n,k})$ and $G$ is also a planar graph with $\phi(G)\leq k-1$, contradicting the choice of $G_{n,k}$. Therefore, we may assume that $X$ is the Perron vector of $G_{n,k}$ with its entry $x_u$ corresponding to the vertex $u\in V(G_{n,k})$.
Note that $K_{1,n-1}$ is a planar graph of order $n$ without $k$ edge-disjoint cycles.
By the choice of $G_{n,k}$, we have \begin{align}\label{liu1e}\rho(G_{n,k})\ge
\rho(K_{1,n-1})=\sqrt{n-1}.\end{align} Next, we give two lemmas.

\begin{lem}\label{lemma4.1}
Let $n\ge 16k^2$ and $u^*$ be a vertex in $V(G_{n,k})$ with $x_{u^*}=\max_{u\in V(G_{n,k})}x_u$. Then $u^*$ is a dominating vertex.
\end{lem}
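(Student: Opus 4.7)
The plan is to prove the lemma by contradiction. Suppose $u^*$ is not dominating, so $W := V(G_{n,k})\setminus (N_{G_{n,k}}(u^*)\cup\{u^*\}) \neq \emptyset$. We will construct a planar graph $G'$ of order $n$ with $\phi(G')\le k-1$ and $\rho(G')>\rho(G_{n,k})$, contradicting the extremality of $G_{n,k}$. Normalize the Perron vector so that $x_{u^*}=1$; then $0<x_v\le 1$ for every $v$. Set $\rho:=\rho(G_{n,k})$; by~\eqref{liu1e}, $\rho\ge\sqrt{n-1}$, and together with the hypothesis $n\ge 16k^2$ this yields $\rho\ge 4k-1$, so that $\rho$ dominates the $O(k)$-order constants appearing below.

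Two structural bounds underlie the argument. \emph{(a)} For every $w\in W$, $|N_{G_{n,k}}(w)\cap N_{G_{n,k}}(u^*)|\le 2k-1$; otherwise, $2k$ common neighbors $v_1,\dots,v_{2k}$ of $u^*$ and $w$ would produce $k$ edge-disjoint $4$-cycles $u^*v_{2i-1}wv_{2i}u^*$, contradicting $\phi(G_{n,k})\le k-1$. \emph{(b)} Applying Theorem~\ref{thm1.1} to the induced subgraph $G_{n,k}[N_{G_{n,k}}(u^*)\cup\{u^*\}]$, which is planar with $u^*$ as a dominating vertex and $\phi\le k-1$, we obtain $e(G_{n,k}[N_{G_{n,k}}(u^*)])\le 3k-3$.

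The modification is $G' := G_{n,k}[N_{G_{n,k}}(u^*)\cup\{u^*\}]\cup\{u^*w:w\in W\}$, obtained from $G_{n,k}$ by deleting every edge incident to a vertex of $W$ and attaching a pendant edge $u^*w$ for each $w\in W$. Then $G'$ is planar (pendants attached to $u^*$ preserve planarity of the spine), and $\phi(G')=\phi(G_{n,k}[N_{G_{n,k}}(u^*)\cup\{u^*\}])\le \phi(G_{n,k})\le k-1$ since pendants create no cycles. A direct computation using the eigenvalue equation $\rho x_w=\sum_{v\sim w}x_v$ at each $w\in W$, together with the bookkeeping that edges of $G_{n,k}[W]$ are each counted twice, yields
\[
X^\top\bigl(A(G')-A(G_{n,k})\bigr)X \;=\; 2\sum_{w\in W}x_w(1-\rho x_w)\;+\;2\sum_{\{u,v\}\in E(G_{n,k}[W])}x_u x_v.
\]
If this quantity is strictly positive, then $\rho(G')>\rho$ via the Rayleigh quotient, contradicting extremality.

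The principal obstacle is verifying this positivity. Term-by-term positivity of the first sum would require $x_w<1/\rho$ for each $w\in W$, which the naive bound $\rho x_w\le (2k-1)+\sum_{v\in N_{G_{n,k}}(w)\cap W}x_v$ coming from~(a) does not directly supply. The plan is to combine~(a) with the planarity estimate $e(G_{n,k}[W])\le 3|W|-6$ and to exploit the strictly non-negative contribution of the intra-$W$ term $\sum_{E(G_{n,k}[W])}x_u x_v$, using the quadratic hypothesis $n\ge 16k^2$ (forcing $\rho$ to be substantially larger than $2k$) to force the aggregate to be strictly positive. Once this strict positivity is verified, the desired contradiction is obtained and $u^*$ must be dominating.
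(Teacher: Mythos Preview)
Your overall setup is reasonable, and the displayed identity
\[
X^\top\bigl(A(G')-A(G_{n,k})\bigr)X \;=\; 2\sum_{w\in W}x_w(1-\rho x_w)\;+\;2\sum_{\{u,v\}\in E(G_{n,k}[W])}x_u x_v
\]
is correct. The gap is the final step: you never actually establish that this quantity is positive, and the ``plan'' you outline does not close. Concretely, take the simplest case $|W|=1$, say $W=\{w\}$. The intra-$W$ sum vanishes, so you need $\rho x_w<1$, i.e.\ $\sum_{v\in N(w)}x_v<x_{u^*}$. Your bound~(a) only gives $|N(w)|\le 2k-1$; since each $x_v\le 1$ this yields $\rho x_w\le 2k-1$, which is useless for $k\ge 2$. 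The hypothesis $n\ge 16k^2$ does not help here: enlarging $\rho$ scales $x_w$ down, but leaves $\rho x_w=\sum_{v\sim w}x_v$ unchanged. Likewise your structural bound~(b) on $e(G_{n,k}[N(u^*)])$ is stated but never enters the estimate, and the planarity bound $e(G_{n,k}[W])\le 3|W|-6$ controls the \emph{helpful} term, not the harmful one. What is genuinely missing is a bound of the form $x_v=O(x_{u^*}/\rho)$ for every $v\ne u^*$.

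The paper's proof supplies exactly this missing ingredient via a block argument. It first shows, by local edge-switches, that $u^*$ lies in \emph{every} block of $G_{n,k}$; hence $u^*$ is the only possible cut vertex, and every $v\ne u^*$ has all its neighbours inside the single block $H$ containing it. Corollary~\ref{coro2.2} then gives $d_{G_{n,k}}(v)\le \Delta(H)\le 2k-1$, from which one extracts $x_v\le x_{u^*}/(\rho-2k+2)$ for all $v\ne u^*$. With this small-entry bound in hand, detaching a single non-neighbour $v$ of $u^*$ and reattaching it to $u^*$ already strictly increases $\rho$. Your global modification could also be pushed through once this bound is available, but the bound itself is the heart of the matter, and it requires the block-by-block analysis you bypassed.
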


\begin{proof}
Suppose that $H$ is an arbitrary block of $G_{n,k}$.
It suffices to prove that $u^*\in V(H)$ and $u^*$ is a dominating vertex of $H$.
 We  distinguish the following two cases.

\vspace{1mm}
\noindent{{\bf{Case 1.}}} $H\cong K_2$.

We only need to show $u^*\in V(H)$.
Let $V(H)=\{u_1,u_2\}$ and suppose to the contrary that $u^*\notin V(H)$.
We may assume that $d_{G_{n,k}}(u_2,u^*)\geq d_{G_{n,k}}(u_1,u^*)$,
where $d_{G_{n,k}}(u_2,u^*)$ denotes
the distance between $u_2$ and $u^*$ in $G_{n,k}.$
Since $u_1u_2$ is a cut edge of $G_{n,k}$,
there exists no block which contains $u_2$ and $u^*$ simultaneously.
Now we define $G=G_{n,k}-u_1u_2+u^*u_2$ (see Fig. \ref{figu.3}).
Clearly, $\phi(G)=\phi(G_{n,k})$ and $G$ is also a connected planar graph.
Moreover,
\begin{align*}
\rho(G)-\rho(G_{n,k})\ge X^T(A(G)-A(G_{n,k}))X=2(x_{u^*}x_{u_2}-x_{u_1}x_{u_2})\ge 0.
\end{align*}
\begin{figure}[!ht]
	\centering
	\includegraphics[width=0.6\textwidth]{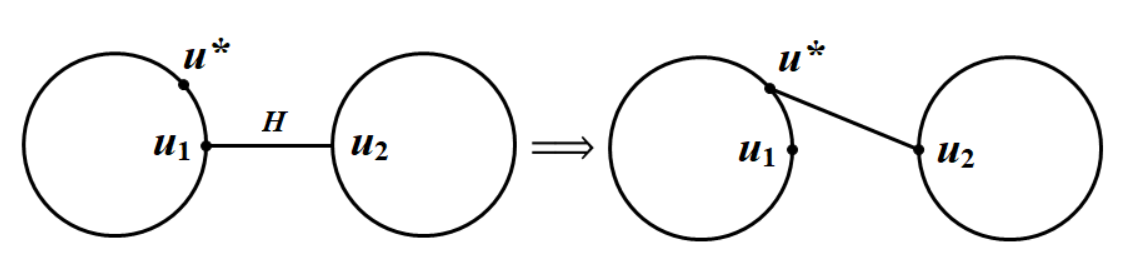}
	\caption{Graphs $G_{n,k}$ and $G$. }{\label{figu.3}}
\end{figure}
It follows that $\rho(G)\ge \rho(G_{n,k})$.
If $\rho(G)=\rho(G_{n,k})$,
then $X$ is the Perron vector of $G$, and thus
$\sum_{u\in N_{G}(u^*)}x_u=\rho(G)x_{u^*}=\rho(G_{n,k})x_{u^*}=\sum_{u\in N_{G_{n,k}}(u^*)}x_u.$ This follows that
$x_{u_2}=0$,
which contradicts the fact that the Perron vector $X$ is a positive vector.
Thus $\rho(G)>\rho(G_{n,k})$, a contradiction.
Therefore, $u^*\in V(H)$.

\vspace{1mm}
\noindent{{\bf{Case 2.}}} $H\not\cong K_2$, that is, $v(H)\ge3$.

Note that $H$ is a block and $H\not\cong K_2$. Then $H$ is 2-connected.
Moreover,
if $u^*\notin V(H)$, then $u^*$ has at most one neighbor in $V(H)$.
We can similarly define a new planar graph $G'$ with
$\phi(G')=\phi(G_{n,k})$ while $\rho(G')>\rho(G_{n,k})$ (see Fig. \ref{figu.4}).
Therefore, $u^*\in V(H)$.

\begin{figure}[!ht]
	\centering
	\includegraphics[width=0.45\textwidth]{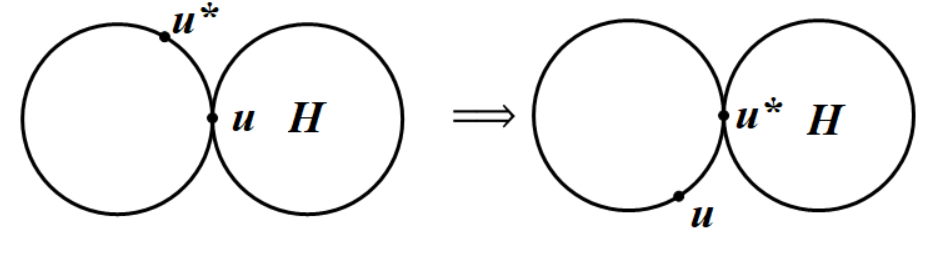}
	\caption{Graphs $G_{n,k}$ and $G'$. }{\label{figu.4}}
\end{figure}

Combining with  Case 1,
we can see that every block of $G_{n,k}$ contains the vertex $u^*$.
This implies that $u^*$ is the only possible cut vertex of $G_{n,k}$.
Since $\phi(H)\leq\phi(G_{n,k})\le k-1$, by Corollary \ref{coro2.2}, we have
\begin{align}\label{al03}
\Delta(H)\le 2k-1.
\end{align}
Now let $\rho=\rho(G_{n,k})$ and $v^*\in V(H)$
with $x_{v^*}=\max_{u\in V(H)\setminus \{u^*\}}x_u$.
Note that $N_{G_{n,k}}(v^*)=N_H(v^*).$
By (\ref{al03}), we have
$\rho x_{v^*}=\sum_{u\in N_H(v^*)}x_u\le (2k-2)x_{v^*}+x_{u^*}.$
From (\ref{liu1e}), it follows that $\rho>2k-2$ and thus
\begin{align}\label{al04}
0< x_{v^*}\le \frac{x_{u^*}}{\rho-2k+2}.
\end{align}

In the following, we show that $u^*$ is a dominating vertex of $H$.
If not, then we can find a vertex $v\in V(H)$ such that it is not a neighbor of $u^*$.
Now we define $G''=G_{n,k}-\{vv_i: v_i\in N_H(v)\}+vu^*$.
Then $\phi(G'')\le \phi(G_{n,k})\le k-1$ and $G''$ is also a planar graph.
Combining (\ref{al03}) and (\ref{al04}), we have
$$\rho(G'')-\rho(G_{n,k})\ge 2x_v\,\Big(x_{u^*}-\sum_{v_i\in N_H(v)}x_{v_i}\Big)
\ge 2x_v\big(x_{u^*}-(2k-1)x_{v^*}\big)
\ge2 x_v x_{u^*} \Big(1-\frac{2k-1}{\rho-2k+2}\Big).
$$
Since $n\ge 16k^2$ and $\rho(G_{n,k})\ge \sqrt{n-1}$,
we have $\rho(G_{n,k})>4k-3$,
which implies that $\rho(G'')>\rho(G_{n,k})$, a contradiction.
This completes the proof.
\end{proof}

\begin{lem}\label{lemma4.2}
Let $n \ge 144  k^4$ and $\mathcal{G}_{n,k}$ be the family of graphs with maximum number of edges over all planar graphs of order $n$ and maximum degree $n-1$ without $k$ edge-disjoint cycles. Then $G_{n,k}\in \mathcal{G}_{n,k}$.
\end{lem}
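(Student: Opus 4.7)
The plan is to argue by contradiction. Lemma \ref{lemma4.1} already exhibits a dominating vertex $u^*$ in $G_{n,k}$, so $G_{n,k}$ itself belongs to the class over which $\mathcal{G}_{n,k}$ maximizes edges. If $G_{n,k}\notin\mathcal{G}_{n,k}$, Theorem \ref{thm1.1} forces $e(G_{n,k})\le n+3k-5$, equivalently $e(H)\le 3k-4$ where $H:=G_{n,k}-u^*$. I would derive a contradiction by comparing $\rho(G_{n,k})$ with $\rho(G^*)$ for the explicit graph $G^*:=K_1\nabla(F_{k-1}\cup(n-2k)K_1)$, which by Lemma \ref{lemma2.1} and Theorem \ref{thm1.1} lies in $\mathcal{G}_{n,k}$.

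First I would extract an upper bound on $\rho_1:=\rho(G_{n,k})$ from the Perron vector $X$. As in the proof of Lemma \ref{lemma4.1}, every block of $G_{n,k}$ contains $u^*$, so for each $v\neq u^*$ the block containing $v$ is either a $K_2$ or is 2-connected with $\phi\le k-1$; Corollary \ref{coro2.2} then yields $d_H(v)\le 2k-2$ in both cases. Letting $v^*\in V(H)$ maximize the Perron entry and applying the eigenequation at $v^*$ gives $x_{v^*}\le x_{u^*}/(\rho_1-2k+2)$. Plugging this into the identity $\rho_1^2 x_{u^*}=(n-1)x_{u^*}+\sum_{vw\in E(H)}(x_v+x_w)$ and using $e(H)\le 3k-4$ yields
\[
\rho_1^2-(n-1)\;\le\;\frac{6k-8}{\rho_1-2k+2}.
\]

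Next I would produce a matching lower bound on $\rho^*:=\rho(G^*)$ by exploiting symmetry: the Perron vector of $G^*$ takes only four distinct values, namely $x_{u^*}$, the value $x_c$ at the hub of $F_{k-1}$, $x_t$ at each blade, and $x_i=x_{u^*}/\rho^*$ at each pendant. Solving the eigenequations $\rho^* x_t=x_{u^*}+x_c+x_t$ at a blade and $\rho^* x_c=x_{u^*}+2(k-1)x_t$ at the hub gives closed forms for $x_c,x_t$ in terms of $x_{u^*}$, so substituting into $\rho^{*2}x_{u^*}-(n-1)x_{u^*}=2(k-1)(x_c+2x_t)$ produces the exact identity
\[
\rho^{*2}-(n-1)\;=\;\frac{2(k-1)(3\rho^*+2k-1)}{\rho^{*2}-\rho^*-2k+2}.
\]
Using $3\rho^*+2k-1\ge 3\rho^*$ and $\rho^{*2}-\rho^*-2k+2\le\rho^*(\rho^*-1)$ for $k\ge 1$ then gives $\rho^{*2}-(n-1)\ge 6(k-1)/(\rho^*-1)$.

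The main algebraic step, and the only place I expect any real friction, is combining these two estimates with the extremality inequality $\rho_1\ge\rho^*$. Substituting them into $\rho^{*2}\le\rho_1^2$ and cross-multiplying gives $6(k-1)(\rho_1-2k+2)\le(6k-8)(\rho^*-1)$; then replacing $\rho_1$ by $\rho^*$ on the left using $\rho_1\ge\rho^*$ and collecting the coefficient $6(k-1)-(6k-8)=2$ I expect to obtain $2\rho^*\le 12(k-1)^2-(6k-8)=12k^2-30k+20$, hence $\rho^*\le 6k^2-15k+10$. But $G^*$ contains the spanning star $K_{1,n-1}$, so $\rho^*\ge\sqrt{n-1}\ge\sqrt{144k^4-1}>12k^2-1>6k^2-15k+10$ (the last inequality because $6k^2+15k-11>0$ for $k\ge 2$), a contradiction that finishes the proof.
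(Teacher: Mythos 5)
Your argument is correct, but the key comparison is carried out differently from the paper. The paper also argues by contradiction, but it takes an arbitrary edge-maximal graph $G\in\mathcal{G}_{n,k}$, places it on the same vertex set with the same dominating vertex $u^*$, and compares $X^TA(G)X$ with $X^TA(G_{n,k})X$ using the Perron vector $X$ of $G_{n,k}$ together with the two-sided entry bounds $x_{u^*}/\rho\le x_u\le x_{u^*}/(\rho-2k+2)$; since $e(G-u^*)=3k-3$ while $e(G_{n,k}-u^*)\le 3k-4$, the Rayleigh quotient of $G$ strictly exceeds $\rho(G_{n,k})$ once $\rho>12(k-1)^2$, a contradiction. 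You instead never touch a generic member of $\mathcal{G}_{n,k}$: you bound $\rho_1^2-(n-1)$ from above by $\tfrac{6k-8}{\rho_1-2k+2}$ using only the eigenequation at $u^*$ and the degree bound $d_H(v)\le 2k-2$ (which you justify exactly as the paper does, via ``every block contains $u^*$'' and Corollary \ref{coro2.2}), and you bound $\rho^{*2}-(n-1)$ from below for the explicit candidate $K_1\nabla(F_{k-1}\cup(n-2k)K_1)$ by solving the equitable-partition eigenequations; I checked your exact identity $\rho^{*2}-(n-1)=\tfrac{2(k-1)(3\rho^*+2k-1)}{\rho^{*2}-\rho^*-2k+2}$ and the ensuing chain $6(k-1)/(\rho^*-1)\le(6k-8)/(\rho_1-2k+2)$, $2\rho^*\le 12(k-1)^2-(6k-8)$, and the final contradiction with $\rho^*\ge\sqrt{n-1}$; all the algebra is right, and $n\ge144k^4$ is more than enough. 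What the paper's route buys is brevity and no explicit spectral computation (only the edge counts $3k-3$ versus $\le 3k-4$ matter), at the small cost of the slightly glossed step of identifying vertex sets and dominating vertices; your route buys an explicit lower bound on $\rho^*$ and avoids that identification entirely, at the cost of a heavier computation that exploits the specific extremal structure, and it only uses $x_u\le x_{u^*}/(\rho-2k+2)$ rather than both entry bounds. One small point to make explicit when writing it up: $\mathcal{G}_{n,k}$ consists of graphs with exactly $n+3k-4$ edges because the candidate $K_1\nabla(F_{k-1}\cup(n-2k)K_1)$ attains the bound of Theorem \ref{thm1.1} (here $n\ge 2k$), which is what turns $G_{n,k}\notin\mathcal{G}_{n,k}$ into $e(G_{n,k}-u^*)\le 3k-4$.
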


\begin{proof}
Let $u$ be an arbitrary vertex in $V(G_{n,k})\setminus \{u^*\}$ and $\rho=\rho(G_{n,k})$.
Since $u^*$ is a dominating vertex by Lemma \ref{lemma4.1},
we have $\rho x_u\geq x_{u^*}$, that is,
\begin{align}\label{al05}
x_u\ge \frac{x_{u^*}}{\rho}.
\end{align}
Moreover, by (\ref{al04}) we have
\begin{align}\label{al06}
x_u\le \frac{x_{u^*}}{\rho-2k+2}.
\end{align}
Now suppose to the contrary that $G_{n,k}\notin \mathcal{G}_{n,k}$.
We choose an arbitrary graph $G\in \mathcal{G}_{n,k}$.
Then $e(G_{n,k})\le e(G)-1$.
We may assume that $V(G)=V(G_{n,k})$ and $u^*$ is also a dominating vertex of $G$.
Let $H^*=G_{n,k}-u^*$ and $H=G-u^*$. Then
\begin{align}\label{al07}
e(H^*)\le e(H)-1.
\end{align}
Furthermore,
\begin{align}\label{al08}
\rho(G)-\rho(G_{n,k})\geq X^T\,\big(A(G)-A(G_{n,k})\big)\,X=
2\left(\sum_{uv\in E(H)}x_ux_v-\sum_{uv\in E(H^*)}x_ux_v\right).
\end{align}
By (\ref{al05}), (\ref{al06}) and (\ref{al07}), we obtain
\begin{eqnarray}\label{al09}
\sum_{uv\in E(H)}x_ux_v-\sum_{uv\in E(H^*)}x_ux_v
&\geq&e(H)\,\frac{x_{u^*}^2}{\rho^2}- e(H^*)\,\frac{x_{u^*}^2}{(\rho-2k+2)^2}\nonumber\\
&\geq& x_{u^*}^2\left(\frac{e(H)}{\rho^2}-\frac{e(H)-1}{(\rho-2k+2)^2}\right).
\end{eqnarray}
By Claim \ref{claim3.2} and Claim \ref{claim3.3}, we know that $e(H)=3k-3$.
Since $n\ge 144k^4$, we have $\rho(G_{n,k})\ge \sqrt{n-1}>12(k-1)^2$.
Thus by (\ref{al08}) and (\ref{al09}), we get that $\rho(G)-\rho(G_{n,k})\geq 2\big(\sum_{uv\in E(H)}x_ux_v-\sum_{uv\in E(H^*)}x_ux_v\big)>0$,
a contradiction. This completes the proof.
\end{proof}

In the following, we shall give the proof of Theorem \ref{thm1.2}.

\vspace{1mm}
\noindent{\bf Proof of Theorem \ref{thm1.2}.}
By Lemma \ref{lemma4.2} and Theorem \ref{thm1.1},
we have $G_{n,k}\cong K_1\nabla H$, where $t(H)=k-1$ and every non-trivial component of $H$ is a triangle-cactus.
Let $u^*,v^*\in V(G_{n,k})$ with $x_{u^*}=\max_{u\in V(G_{n,k})}x_u$ and $x_{v^*}=\max_{u\in V(G_{n,k})\setminus\{u^*\}}x_u$.
By Lemma \ref{lemma4.1}, $u^*$ is a dominating vertex of $G_{n,k}$.

Now it suffices to show that $G_{n,k}-u^*$
is the disjoint union of $F_{k-1}$ and $n-2k$ isolated vertices.
Suppose to the contrary, then there exists a triangle $C=u_0u_1u_2u_0$ in
$G_{n,k}-u^*$ such that $v^*\notin V(C)$.
We can see that $v^*$ has at most one neighbor in $V(C)$, as  every non-trivial component of $G_{n,k}-u^*$ is a triangle-cactus.
Now we may assume without loss of generality that $u_1, u_2\not\in N_{G_{n,k}}(v^*)$.
Let $G$ be the graph obtained from $G_{n,k}$ by replacing the edges
$u_0u_1$ and $u_0u_2$ with the edges $v^*u_1$ and $v^*u_2$.
Since every non-trivial component of $G_{n,k}-u^*$ is a triangle-cactus,
it is clear that $G-u^*$ has the same property.
By Lemma \ref{lemma2.1}, $G$ is a planar graph with $\phi(G)=t(G-u^*)=t(G_{n,k}-u^*)=k-1$.
However,
\begin{align*}
\rho(G)-\rho(G_{n,k})\geq X^T\,\big(A(G)-A(G_{n,k})\big)\,X=2(x_{v^*}-x_{u_0})(x_{u_1}+x_{u_2})
\geq0.
\end{align*}
Furthermore, we can get that $\rho(G)>\rho(G_{n,k})$ by a similar discussion as in the proof of Lemma \ref{lemma4.1},
a contradiction.
This completes the proof.
\hfill{\rule{4pt}{8pt}}

\section{Conclusion remarks}\label{5}

In this section, we consider extremal problems without the assumption graphs being planar.
In the classic monograph due to Bollob\'{a}s \cite{BO},
we can find the following edge-extremal problem.

\begin{prob}\label{prob5.1}
What is the maximum number of edges in a graph of order $n$
without $k$ edge-disjoint cycles, or equivalently, what is the exact value of $g(k)$?
\end{prob}

As pointed out by Bollob\'{a}s, Problem \ref{prob5.1} is rather difficult.
We now provide a weak version of Problem \ref{prob5.1} as follows.

\begin{prob}\label{prob5.2}
What is the maximum number of edges in a graph of order $n$ and maximum degree $n-1$
without $k$ edge-disjoint cycles?
\end{prob}

In the following,
we pay our attention  to  a spectral version of Problem \ref{prob5.1},
which was proposed in \cite{LIN}.

\begin{prob}\label{prob5.3}
What is the maximum spectral radius of a graph of order $n$
without $k$ edge-disjoint cycles?
\end{prob}

Let $\mathcal{G}'_{n,k}$ and $\mathcal{G}''_{n,k}$ be the families of extremal graphs for Problem \ref{prob5.2} and Problem \ref{prob5.3}, respectively.
Then we have the following result.

\begin{thm}\label{thm5.1}
If $n\ge 64k^6$, then $\mathcal{G}''_{n,k}\subseteq \mathcal{G}'_{n,k}$.
\end{thm}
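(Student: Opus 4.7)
The plan is to follow the two-stage strategy used for Theorem~\ref{thm1.2} essentially verbatim, replacing the planarity assumption throughout by the weaker structural input supplied by Corollary~\ref{coro2.2}, which holds for arbitrary $2$-connected graphs. Fix $G\in\mathcal{G}''_{n,k}$, let $X$ be a Perron eigenvector of $G$ with $x_{u^*}=\max_{u}x_u$, and set $\rho=\rho(G)$. Since the star $K_{1,n-1}$ is a feasible competitor, $\rho\ge\sqrt{n-1}$.

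The first stage is the non-planar analog of Lemma~\ref{lemma4.1}: $G$ is connected and $u^*$ is a dominating vertex of $G$. The proof of Lemma~\ref{lemma4.1} only uses planarity to assert that the perturbed graphs remain planar; deleting that qualifier, the same edge-swap arguments go through unchanged. The one structural input used there is Corollary~\ref{coro2.2}, which forces $\Delta(H)\le 2k-1$ for every block $H$ of $G$ (because $\phi(H)\le\phi(G)\le k-1$). This yields, as in \eqref{al04}, the uniform upper bound $x_v\le x_{u^*}/(\rho-2k+2)$ for every $v\ne u^*$, and then the relocation argument forces $u^*$ to dominate $G$.

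The second stage mirrors Lemma~\ref{lemma4.2}. Suppose for contradiction that $G\notin\mathcal{G}'_{n,k}$, pick $G^{\flat}\in\mathcal{G}'_{n,k}$ with $e(G^{\flat})\ge e(G)+1$, and identify its dominating vertex with $u^*$ on a common vertex set; write $H^{*}=G-u^*$ and $H^{\flat}=G^{\flat}-u^*$, so $e(H^{\flat})\ge e(H^{*})+1$. Since only the edges of $H^{*}$ and $H^{\flat}$ differ between the two graphs, the Rayleigh swap gives
\[
\rho(G^{\flat})-\rho \;\ge\; \frac{2}{X^T X}\left(\sum_{uv\in E(H^{\flat})}x_u x_v \;-\; \sum_{uv\in E(H^{*})}x_u x_v\right).
\]
Combining $x_u x_v\le x_{u^*}^2/(\rho-2k+2)^2$ on the second sum (from the first stage) with $x_u x_v\ge x_{u^*}^2/\rho^2$ on the first sum (from $\rho x_v\ge x_{u^*}$), the task reduces to verifying
\[
\frac{e(H^{\flat})}{\rho^2} \;>\; \frac{e(H^{*})}{(\rho-2k+2)^2}.
\]

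The main obstacle is to pin down $e(H^{*})$ tightly enough that the hypothesis $n\ge 64k^6$ closes this inequality. Applying Corollary~\ref{coro2.2} to each block $K_1\nabla H_i$ of $G$, where $H_i$ ranges over the non-trivial components of $H^{*}$, forces $|V(H_i)|\le 2k-1$; summing $\phi(K_1\nabla H_i)\ge\lfloor|V(H_i)|/2\rfloor$ across these blocks yields $\sum_i|V(H_i)|\le 3(k-1)$, so $e(H^{*})\le\binom{3k-3}{2}$. (An even cleaner estimate $e(H^{*})\le g(k)$ is immediate from the very definition of $g(k)$ applied to $K_1\nabla H^{*}$.) Together with $\rho\ge\sqrt{n-1}$, either bound makes $n\ge 64k^6$ more than sufficient to render the bracketed quantity strictly positive, whence $\rho(G^{\flat})>\rho$, contradicting $G\in\mathcal{G}''_{n,k}$. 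Therefore $G\in\mathcal{G}'_{n,k}$, as claimed.
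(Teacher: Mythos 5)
Your architecture is the paper's: extend Lemma~\ref{lemma4.1} to general graphs via Corollary~\ref{coro2.2}, then run the Rayleigh swap of Lemma~\ref{lemma4.2} against an edge-extremal competitor. The gap is in the final quantitative step, and it is real: the bound $e(H^{*})\le\binom{3k-3}{2}$ obtained by first summing the vertex counts ($\sum_i v(H_i)\le 3(k-1)$) is too weak for the stated threshold $n\ge 64k^6$. Positivity of your bracket $\frac{e(H^{\flat})}{\rho^2}-\frac{e(H^{*})}{(\rho-2k+2)^2}$, in the worst case $e(H^{\flat})=e(H^{*})+1=m+1$, amounts to $\rho^2-2(m+1)(2k-2)\rho+(m+1)(2k-2)^2>0$, i.e.\ essentially $\rho>4(k-1)(m+1)$. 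You only know $\rho\ge\sqrt{n-1}\approx 8k^3$, while $m\le\binom{3k-3}{2}\approx\tfrac92 k^2$ only gives the requirement $\rho\gtrsim 18k^3$. Concretely, for $k=5$ and $n=64k^6=10^6$ your worst case is $m=\binom{12}{2}=66$, and with $\rho=\sqrt{n-1}\approx 1000$ one computes $(m+1)(\rho-8)^2-m\rho^2\approx -6.8\times10^4<0$, so your chain of inequalities does not force $\rho(G^{\flat})>\rho$ and no contradiction is reached; the same failure occurs for every $k\ge 5$ near the threshold. The paper avoids this by bounding each component separately rather than aggregating vertices first: $v(H_i)\le 2\phi(K_1\nabla H_i)+1$ gives $e(H_i)\le \phi_i(2\phi_i+1)$, and convexity with $\sum_i\phi_i\le k-1$ yields $e\le (k-1)+2(k-1)^2<2k^2$; this factor-$\tfrac94$ improvement over $\binom{3k-3}{2}$ is exactly what makes $8k^3>4(k-1)\cdot 2k^2$, hence $n\ge 64k^6$, suffice.

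Your parenthetical alternative $e(H^{*})\le g(k)$ is correct (since $e(G)=n-1+e(H^{*})$ and $G$ has no $k$ edge-disjoint cycles) and asymptotically even stronger, but the only general bound quoted is $g(k)<(2+o(1))k\log_2 k$ with an inexplicit $o(1)$ term, so it cannot verify the explicit constant $64$ for all $k\ge 2$ either. The rest is fine: the dominating-vertex stage does carry over verbatim (the paper says exactly this), and your choice to bound $e(H^{*})$ and use $e(H^{\flat})\ge e(H^{*})+1$, versus the paper's bounding of $e(H)$ with $e(H^{*})\le e(H)-1$, is an immaterial difference since both graphs have a dominating vertex and $\phi\le k-1$. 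So the proof is repaired simply by replacing your edge bound with the component-wise bound above; as written, however, the "more than sufficient" assertion is false at the stated threshold.
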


\begin{proof} Let $G_{n,k}$ be an arbitrary graph in $\mathcal{G}''_{n,k}$
and $X$ be the Perron vector of  $G_{n,k}$ (one can easily check that $G_{n,k}$ must be connected).   If $k=1$, then $G_{n,k}$ is acyclic, and hence $G_{n,1}\cong K_{1,n-1}$, as desired.  Thus, we suppose that $k\ge 2$ in the following.
Assume that $u^*\in V(G_{n,k})$ with $x_{u^*}=\max_{u\in V(G_{n,k})}x_u.$
Then by the proof of Lemma \ref{lemma4.1}, we can see that
Lemma \ref{lemma4.1} also holds for general graphs instead of planar graphs,
that is, $u^*$ is a dominating vertex of $G_{n,k}$.

Now suppose to the contrary that $G_{n,k}\notin \mathcal{G}'_{n,k}$.
We choose a graph $G\in \mathcal{G}'_{n,k}$.
Then $e(G_{n,k})\leq e(G)-1$.
We may assume that $V(G)=V(G_{n,k})$ and $u^*$ is also a dominating
vertex of $G$. Let $H^*=G_{n,k}-u^*$ and $H=G-u^*$. Then, we similarly
have inequalities (\ref{al05})-(\ref{al09}).

Assume that $H$ contains $r$ non-trivial connected components $H_1,H_2,\dots,H_r$.
Then $e(H)=\sum_{i=1}^{r}e(H_i)$ and
\begin{align}\label{al010}
\phi(G)=\sum_{i=1}^{r}\phi(K_1\nabla H_i)\le k-1.
\end{align}
Since every $K_1\nabla H_i$ is 2-connected, we have
$\Delta(K_1\nabla H_i)\le 2\phi(K_1\nabla H_i)+1$ by Corollary \ref{coro2.2}.
This implies that $v(H_i)\le 2\phi(K_1\nabla H_i)+1$,
and thus
\begin{align}\label{al011}
e(H_i)\le{v(H_i) \choose 2}\le \phi(K_1\nabla H_i)(2\phi(K_1\nabla H_i)+1).
\end{align}
Combining  with  (\ref{al010}) and (\ref{al011}), we obtain
\begin{align*}
e(H)=\sum_{i=1}^{r}e(H_i)\leq \sum_{i=1}^{r}\phi(K_1\nabla H_i)+2\sum_{i=1}^{r}\phi^2(K_1\nabla H_i)
\leq(k-1)+2(k-1)^2,
\end{align*}
which gives $e(H)<2k^2.$
Moreover, by (\ref{al08}) and (\ref{al09}), we have
\begin{eqnarray*}
\rho(G')-\rho(G_{n,k})
&\ge& 2x_{u^*}^2\,\left(\frac{e(H)}{\rho^2}-\frac{e(H)-1}{(\rho-2k+2)^2}\right) \\
&>& \frac{2x_{u^*}^2}{\rho(\rho-2k+2)^2}\Big(\rho-4(k-1)\,e(H)\Big)
\end{eqnarray*}
where $\rho=\rho(G_{n,k}).$
Note that $ n\ge 64k^6$. Then $\rho\geq\sqrt{n-1} >8k^3-1$.
We can check that $\rho(G)>\rho(G_{n,k})$,
which contradicts the choice of $G_{n,k}$.
This completes the proof.
\end{proof}

By Theorem \ref{thm5.1}, we can determine $\mathcal{G}''_{n,k}$
for small $k$.

\begin{thm}\label{thm5.2}
Let $k\in \{1,2,3\}$ and $n\geq64k^6$.
Then the extremal graph in Problem \ref{prob5.3}
is the same as  that  in Theorem \ref{thm1.2}.
\end{thm}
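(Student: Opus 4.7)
The plan is to combine Theorem~\ref{thm5.1} with a case analysis. Let $G_{n,k}\in\mathcal{G}''_{n,k}$. By Theorem~\ref{thm5.1} we have $G_{n,k}\in\mathcal{G}'_{n,k}$, and by the argument of Lemma~\ref{lemma4.1} (which, as noted in the proof of Theorem~\ref{thm5.1}, holds for general graphs) $G_{n,k}$ admits a dominating vertex $u^*$; set $H=G_{n,k}-u^*$.

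For $k=1$, $G_{n,1}$ is acyclic with a dominating vertex, so $G_{n,1}\cong K_{1,n-1}=K_1\nabla(F_0\cup(n-2)K_1)$. For $k=2$, the bound $e(H)\le(k-1)+2(k-1)^2=3$ from the proof of Theorem~\ref{thm5.1} applies, and a direct inspection of graphs with three edges shows that only $H\cong K_3\cup(n-4)K_1$ yields $\phi(K_1\nabla H)\le 1$ (the alternatives $P_4$, $K_{1,3}$, $3K_2$ and $P_3\cup K_2$ each produce two edge-disjoint cycles in $K_1\nabla H$), so $G_{n,2}=K_1\nabla(F_1\cup(n-4)K_1)$.

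The substantive case is $k=3$. Here the inequalities $v(H_i)\le 2\phi(K_1\nabla H_i)+1$ and $\sum_i\phi(K_1\nabla H_i)\le 2$ from the proof of Theorem~\ref{thm5.1} restrict $H$ to a finite list of small graphs with $e(H)\le 10$. A case-by-case check rules out every $H$ with $\phi(K_1\nabla H)\le 2$ except for three families attaining the maximum value $e(H)=6$:
\[
H_a=F_2\cup(n-6)K_1,\qquad H_b=K_{2,3}\cup(n-6)K_1,\qquad H_c=2K_3\cup(n-7)K_1.
\]
(Configurations such as $K_4$, $K_5-tK_2$ for $t\le 3$, $C_5$ with a chord, a book plus a pendant, $C_6$, $K_3\cup K_{1,3}$, $K_3\cup P_4$, and any $H$ with $e(H)\ge 7$ each yield at least three edge-disjoint cycles in $K_1\nabla H$ and are excluded.) Write the associated graphs $G_a,G_b,G_c$; it remains to prove $\rho(G_a)>\rho(G_b)$ and $\rho(G_a)>\rho(G_c)$.

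For $G_c$, let $v^*\in V(G_c)\setminus\{u^*\}$ be a vertex of maximum Perron entry; the triangle of $2K_3$ not containing $v^*$ can be swapped onto $v^*$ exactly as in the final paragraph of the proof of Theorem~\ref{thm1.2}, transforming $G_c$ into $G_a$ and strictly increasing $\rho$. The main obstacle is comparing $G_a$ with $G_b$: since $K_{2,3}$ is triangle-free, the triangle-swap is unavailable. I would resolve this by taking the natural equitable partition of each graph (classes $\{u^*\}$, the higher-degree internal class, the remaining internal class, and the pendants) and computing the corresponding $4\times 4$ quotient-matrix characteristic polynomials
\[
p_a(\lambda)=\lambda^4-\lambda^3-(n+3)\lambda^2+(n-13)\lambda+4n-24,
\]
\[
p_b(\lambda)=\lambda^4-(n+5)\lambda^2-12\lambda+6n-36.
\]
Their difference simplifies to $(p_a-p_b)(\lambda)=(\lambda^2-n+1)(2-\lambda)+10$. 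Evaluating at $\lambda=\rho(G_b)$ and using the quantitative lower bound $\rho(G_b)^2-n+1=\Theta(1/\sqrt{n})$ obtained from $p_b(\rho(G_b))=0$ together with $\rho(G_b)>\sqrt{n-1}$, we get $p_a(\rho(G_b))<0$ for $n\ge 64\cdot 3^6$, hence $\rho(G_b)<\rho(G_a)$. This completes the proof.
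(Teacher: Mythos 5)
Your proposal is correct, and its skeleton (pass to the edge-extremal family via Theorem~\ref{thm5.1}, then settle $k\le 3$ by a finite case analysis) is the paper's; the interesting divergence is in how the tie at $e(H)=6$ for $k=3$ is broken. The paper never lines up the three maximizers: it first eliminates two nontrivial components ($2K_3$) by a triangle swap, then, for a single $5$-vertex component, splits according to $\phi(H)=1$ or $\phi(H)=2$, shows the only surviving bicyclic configuration is the graph $H_1$ of its figure (which is $K_{2,3}$), and disposes of it by a purely local switching argument ($G_{n,3}-u_1v_1-u_1v_2+u_1u_2+v_1v_2$) using the symmetry of the Perron vector and the eigenvalue equations, with no characteristic polynomials. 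You instead enumerate the three candidates $F_2$, $K_{2,3}$, $2K_3$, treat $2K_3$ by the same swap as in Theorem~\ref{thm1.2}, and compare $F_2$ with $K_{2,3}$ via equitable-partition quotient matrices; I checked your polynomials $p_a,p_b$ and the identity $(p_a-p_b)(\lambda)=(\lambda^2-n+1)(2-\lambda)+10$, and they are correct. The one step you only sketch, the quantitative bound giving $p_a(\rho(G_b))<0$, does close, and more cleanly than a $\Theta(1/\sqrt{n})$ estimate: writing $p_b(\lambda)=(\lambda^2-n+1)(\lambda^2-6)-12\lambda-30$, the relation $p_b(\rho_b)=0$ yields $(\rho_b^2-n+1)(\rho_b-2)=12+\frac{6\rho_b+12}{\rho_b^2-6}>12$, hence $p_a(\rho_b)<-2<0$ and $\rho(G_b)<\rho(G_a)$ with no asymptotics at all. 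Two small points to make your case analysis airtight: justify in one line that no $H$ with $e(H)\ge 7$ survives (a $5$-vertex graph with $7$ edges exceeds the triangle-free maximum $6$, so it contains a triangle, and the four remaining edges either contain two independent edges or all meet one vertex, which again produces a third edge-disjoint cycle through the apex), and note that the swap from $2K_3$ to $F_2$ is strict via the equality/eigen-equation argument of Lemma~\ref{lemma4.1}. In exchange for the explicit computation, your route gives a transparent numerical comparison of the two tied $6$-edge configurations, while the paper's switching argument is computation-free but leans on the figure-based classification of bicyclic $5$-vertex graphs.
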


\begin{proof}
Let $G_{n,k}$ be an arbitrary graph in Problem \ref{prob5.3}.
Then $G_{n,k}\in \mathcal{G}'_{n,k}$ by Theorem \ref{thm5.1}.
This implies that $G_{n,k}$ contains a dominating vertex $u^*$.
If $k=1$, then    $G_{n,1}\cong K_{1,n-1}$, as desired.
If $k=2$, then $\phi(G_{n,k})\leq1$,
and thus $G_{n,2}-u^*$ has at most one non-trivial component $H$.
By Corollary \ref{coro2.2},
\begin{align*}
\Delta(K_1\nabla H)\leq 2\phi(K_1\nabla H)+1\leq2\phi(G_{n,2})+1\leq3.
\end{align*}
Therefore, $v(H)=\Delta(K_1\nabla H)\leq3$.
Since $G_{n,2}$ is edge-maximal, we can see that $H\cong K_3$.
Hence, $G_{n,2}-u^*$ is the disjoint union of a triangle $F_1$ and
$n-4$ isolated vertices.

If $k=3$, then $\phi(G_{n,k})\leq2$,
and thus $G_{n,3}-u^*$ has at most two non-trivial components.
If $G_{n,3}-u^*$ has exactly two non-trivial components,
then using the same argument as above, we get that
both components are triangles.
Now let $G$ be the graph obtained from $G_{n,3}$ by replacing these two vertex-disjoint triangles with $K_1\cup F_2$.
It is easy to see that $\phi(G)=2$ and $\rho(G)>\rho(G_{n,2})$, a contradiction.
Therefore, $G_{n,3}-u^*$ has at most one non-trivial component $H$.
By Corollary \ref{coro2.2},
\begin{align*}
\Delta(K_1\nabla H)\leq 2\phi(K_1\nabla H)+1\leq2\phi(G_{n,3})+1\leq5.
\end{align*}
Thus $v(H)=\Delta(K_1\nabla H)\leq5$.
Moreover, since $\phi(K_1\nabla F_2)=2$, we have $e(H)\geq e(F_2)=6$
as $G_{n,3}$ is edge-maximal by Theorem \ref{thm5.1}.
If $v(H)\leq4$, then $H\cong K_4$. But now $\phi(G_{n,3})=\phi(K_5)=3$
($K_5$ can be decomposed into two triangles and a quadrilateral),
a contradiction. Therefore, $v(H)=5$.
Recall that $e(H)\geq6$. If $\phi(H)=1$,
then $H$ contains one of the following bicyclic spanning subgraphs (see Fig. \ref{figu.6}).
\begin{figure}[!ht]
	\centering
	\includegraphics[width=0.6\textwidth]{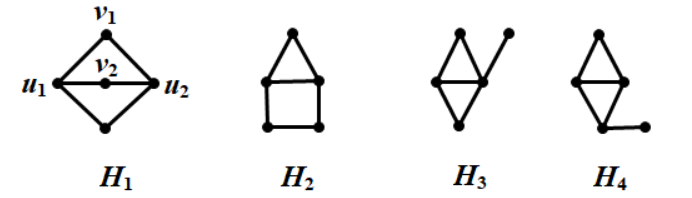}
	\caption{Graphs $H_1,H_2,H_3$ and $H_4$. }{\label{figu.6}}
\end{figure}
For $i\in \{2,3,4\}$, we can observe that $H_i$ contains a triangle and two independent edges; thus $\phi(K_1\nabla H_i)\geq3$, a contradiction.
Therefore, $H$ contains $H_1$ as a spanning subgraph.
Furthermore, $H\cong H_1$, as $H_1$ plus one edge contains a triangle and two independent edges. Put $G=G_{n,3}-u_1v_1-u_1v_2+u_1u_2+v_1v_2$.
In other words, $G$ is the graph obtained from $G_{n,3}$ by replacing $H_1$ with $F_2$.
Let $X$ be the Perron vector of $G_{n,3}$.
We know that $X$ is a positive vector.
Since $x_{u_1}=x_{u_2}$ and $x_{v_1}=x_{v_2}$ by symmetry, we get that
\begin{align*}\rho(G)-\rho(G_{n,3})\ge 2(x_{u_1}x_{u_2}+x_{v_1}x_{v_2}-x_{u_1}x_{v_1}-x_{u_1}x_{v_2})
=2(x_{u_1}-x_{v_1})^2\geq0.
\end{align*}
By the choice of $G_{n,3}$, we have $\rho(G)=\rho(G_{n,3})$ and so $x_{u_1}=x_{v_1}$.
This implies that $x_{u^*}+3x_{v_1}= \rho(G_{n,3})x_{u_1}=\rho(G_{n,3}) x_{v_1}=x_{u^*}+2x_{u_1}$, contradicting $x_{v_1}>0$.

Therefore, $\phi(H)=2$. Note that $v(H)=5$ and $e(H)\geq6$.
Then $H$ must contain $F_2$ as a spanning subgraph.
Since $\phi(G_{n,3})\leq2$, we know that $H$ minus two independent edges is acyclic. This implies that $H\cong F_2$, as desired.
\end{proof}

Now assume that $k\geq4$.
One can check that $\phi(K_1\nabla K_{2,2k-3})=k-1$
(where $K_1\nabla K_{2,2k-3}$ contains $k-1$ edge-disjoint cycles,
which consist of a triangle and $k-2$ quadrilaterals).
However, $K_1\nabla K_{2,2k-3}$ is not a planar graph, since it contains a $K_{3,3}$-minor.
Moreover, $K_1\nabla (K_{2,2k-3}\cup (n-2k)K_1)$ has $n+4k-7$ ($>n+3k-4$) edges.
Combining Theorem \ref{thm5.1}, we can see that the extremal graph in Problem \ref{prob5.3} is no longer the same as the extremal graph in Theorem \ref{thm1.2}
for $k\geq4$.

Theorem \ref{thm5.1} indicates that
Problem \ref{prob5.3} can be reduced to Problem \ref{prob5.2}.
Furthermore, one can see that Problem \ref{prob5.2} is closely related to
the following problem.

\begin{prob}\label{prob5.4}
Let $\mathcal{H}_s$ be a family of connected graphs such that
$\phi(K_1\nabla H)=s$ for each graph $H\in\mathcal{H}_s$.
Determine $\max_{H\in\mathcal{H}_s}\frac{e(H)}{s}.$
\end{prob}

By Corollary \ref{coro2.2}, $v(H)=\Delta(K_1\nabla H)\leq 2s+1$
for each graph $H\in\mathcal{H}_s$.
Therefore, we immediately obtain a trivial bound
$\max_{H\in\mathcal{H}_s}\frac{e(H)}{s}\leq \frac1s{2s+1\choose 2}=2s+1.$
Assume that there exists a graph $H\in\mathcal{H}_s$
such that $H$ is $d$-regular with girth $g$.
Then $2e(H)=d\cdot v(H)$.
Suppose that among $s$ edge-disjoint cycles of $K_1\nabla H$, $q$ of them  contains the dominating vertex $u^*$, where $q\leq s$. Since each of these $q$ cycles contains at least one edge of $H$, we have $s\le q+\frac1g\big(e(H)-q\big)$. This implies that $K_1\nabla H$ can be decomposed into
at most $\frac{v(H)}2+\frac1g\left(e(H)-\frac{v(H)}2\right)$ edge-disjoint cycles, as $q\le \frac{v(H)}2$.
Combining this with $2e(H)=d\cdot v(H)$, it follows that
\begin{align*}\phi(K_1\nabla H)=s\leq e(H)\left(\frac 1d+\frac1g-\frac1{dg}\right),
\end{align*}
and hence $\frac{e(H)}{s}\geq \frac{dg}{d+g-1}$.

An interesting problem, known as \emph{cage problem}, asks for the construction of regular graphs of specified degree and girth with minimum order.
A graph is called a \emph{$(d,g)$-cage}, if it is $d$-regular with girth $g$ and
it has minimum number of vertices.
Cage problem was first considered by Tutte \cite{TU},
and it is also a recognized unsolved difficult problem.
From the above arguments, the solution to Problem \ref{prob5.4} seems to be related to the cage problem. For more progress on cage problem, one can refer
to the dynamic survey \cite{EX}.


\end{document}